\newcolumntype{K}[1]{>{\centering\arraybackslash}p{#1}}
\newtheorem{theorem}{Theorem}     
\numberwithin{theorem}{section}
\newtheorem{lemma}[theorem]{Lemma}     
\newtheorem{corollary}[theorem]{Corollary}     
\newtheorem{proposition}[theorem]{Proposition}     
\theoremstyle{definition}  
\newtheorem{example}[theorem]{Example}     
\newtheorem{remark}[theorem]{Remark}     
\newcommand{\CC}{\mathbb{C}}
\newcommand{\PP}{\mathbb{P}}
\newcommand{\RR}{\mathbb{R}}
\newcommand{\ZZ}{\mathbb{Z}}
\newcommand{\cO}{\mathcal{O}}
\DeclareMathOperator{\conv}{conv}
\DeclareMathOperator{\PSD}{PSD}
\newcommand{\cC}{\mathcal{C}}
\newcommand{\cE}{\mathcal{E}}
\newcommand{\arc}[1]{\wideparen{#1}}
\newcommand{\defcolor}[1]{{\color{blue} #1}}
\newcommand{\demph}[1]{{\it\defcolor{#1}}}
\title{Convex Hull of Two Circles in $\RR^3$}
\author[E.~Nash]{Evan D.~Nash}     
\address{Evan D.~Nash\\     
         Department of Mathematics\\     
         The Ohio State University\\     
         Columbus\\     
         Ohio \ 43210\\     
         USA}     
\email{nash.228@osu.edu}     
\urladdr{https://math.osu.edu/people/nash.228}
\author[A.~Pir]{Ata Firat Pir}     
\address{Ata Firat Pir\\     
         Department of Mathematics\\     
         Texas A\&M University\\     
         College Station\\     
         Texas \ 77843\\     
         USA}     
\email{atafirat@math.tamu.edu}     
\urladdr{http://www.math.tamu.edu/~atafirat/}
\author[F.~Sottile]{Frank Sottile}     
\address{Frank Sottile\\     
         Department of Mathematics\\     
         Texas A\&M University\\     
         College Station\\     
         Texas \ 77843\\     
         USA}     
\email{sottile@math.tamu.edu}     
\urladdr{http://www.math.tamu.edu/~sottile}
\author[L.~Ying]{Li Ying}     
\address{Li Ying\\     
         Department of Mathematics\\     
         Texas A\&M University\\     
         College Station\\     
         Texas \ 77843\\     
         USA}     
\email{98yingli@math.tamu.edu}     
\urladdr{http://www.math.tamu.edu/~98yingli/}
\thanks{Research of Pir, Sottile, and Li supported in part by NSF grant DMS-1501370.}
\thanks{This article was initiated during the Apprenticeship
Weeks (22 August--2 September 2016), led by Bernd
Sturmfels, as part of the Combinatorial Algebraic
Geometry Semester at the Fields Institute.}
\begin{document}

\begin{abstract}
We describe convex hulls of the simplest compact space curves,
 reducible quartics consisting of two circles. 
 When the circles do not meet in complex projective space, their algebraic boundary contains an 
 irrational ruled surface of degree eight whose ruling forms a genus one curve. 
 We classify which curves arise, classify the face lattices of the convex hulls  
 and determine which are spectrahedra.
 We also discuss an approach to these convex hulls using projective duality.
\end{abstract}

\maketitle

\section{Introduction}
Convex algebraic geometry studies convex hulls of semialgebra\-ic sets~\cite{orbitopes}. 
The convex hull of finitely many points, a zero-dimensional variety, is a polytope~\cite{Gr03,Z95}.
Polytopes have finitely many faces, which are themselves polytopes.
The boundary of the convex hull of a higher-dimensional algebraic set typically has infinitely many faces
which lie in algebraic families.
Ranestad and Sturmfels~\cite{RS11} described this boundary using projective duality and
secant varieties.
For a general space curve, the boundary consists of finitely many
two-dimensional faces supported on tritangent planes and a scroll of line segments, called the edge surface.
These segments are stationary bisecants, which join two points of the curve whose 
tangents meet.

We study convex hulls of the simplest nontrivial compact space curves, those
which are the union of two circles lying in distinct planes.
Zero-dimensional faces of such a convex hull are extreme points on the circles.
One-dimensional faces are stationary bisecants. 
It may have two-dimensional faces coming from the planes of the circles.
It may have finitely many nonexposed faces, either points of one circle whose tangent meets the
other circle, or certain tangent stationary bisecants.
Fig.~\ref{Fig:firstTaste} shows some of this diversity.
\begin{figure}[htb]
  \centering
  \raisebox{-37pt}{\includegraphics[height=78pt]{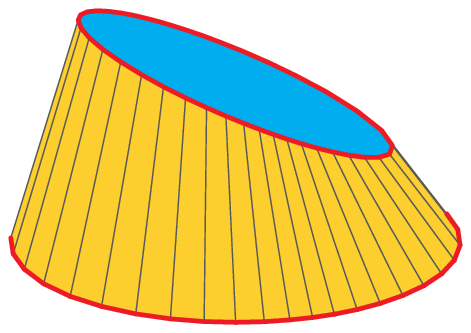}}\quad
  \raisebox{-37pt}{\includegraphics[height=78pt]{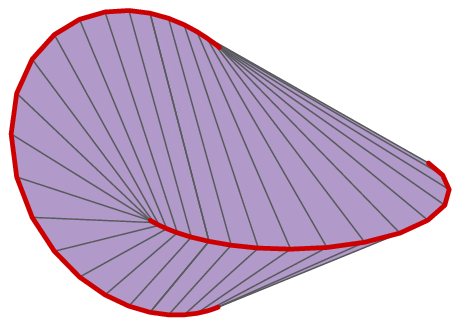}}\quad
  \raisebox{-45pt}{\includegraphics[height=94pt]{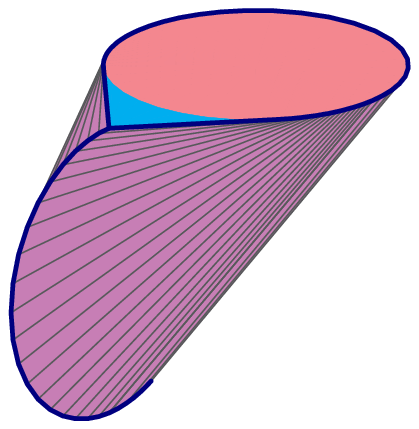}}
  \caption{Some convex hulls of two circles.}\label{Fig:firstTaste}
 \end{figure}
In the convex hull on the left, the discs of both circles are faces, and every face is exposed.
In the oloid in the middle, the discs lie in the interior, an arc 
of each circle is extreme, and 
the endpoints of the arcs are nonexposed.
In the convex hull on the right, there are two nonexposed stationary bisecants lying on its two-dimensional
face, which is the convex hull of one circle and the point where the other circle is tangent to the 
plane of the first. 

These objects have been studied before.  
Paul Schatz discovered and patented the oloid in 1929~\cite{Schatz}, this is the convex hull of two congruent
circles in orthogonal planes, each passing through the center of the other.
It has found industrial uses~\cite{Oloid_AG}, and is a well-known toy.
A curve in $\RR^3$ may roll along its edge surface.
When rolling, the oloid develops its entire surface and has area equal to that of the 
sphere~\cite{Oloid_developable} with equator one of the circles of the oloid.
Other special cases of the convex hull of two circles have been studied from these perspectives~\cite{Finch,TCR}.

This paper had its origins in Subsection 4.1 of~\cite{RS11}, which claimed that the edge surface for a general pair
of circles is composed of cylinders.
We show that this is only the case when the two circles either meet in two points or are
mutually tangent---in all other cases, the edge surface has higher degree and it is an irrational surface of degree
eight when the circles are disjoint in $\CC\PP^3$.
This is related to Problem~3 on Convexity in~\cite{Sturmfels}, on the
convex hull of three ellipsoids in $\RR^3$.
An algorithm was presented in~\cite{ellipsoids} (see the video~\cite{ellipsoids_video}), using projective
duality.
We sketch this in Section~\ref{S:Duality}, and also apply duality to
the convex hull of two circles. 

In Section~\ref{S:background}, we recall some aspects of convexity and convex algebraic geometry, and 
show that the convex hull of two circles is the projection of a spectrahedron.
We study the edge surface and the edge curve of stationary bisecants of complex conics $C_1,C_2\subset\CC\PP^3$ in 
Section~\ref{S:relaxation}.
We show that the edge curve is a reduced curve of bidegree $(2,2)$ in $C_1\times C_2$ and, if 
$C_1\cap C_2=\emptyset$ and neither circle is tangent to the plane of the other,
then the edge surface has degree eight.
We also classify which curves of bidegree $(2,2)$ arise as edge curves to two conics.
All possibilities occur, except a rational curve with a cusp singularity and a maximally reducible curve.

In Section~\ref{S:circles}, we classify the possible arrangements of two circles lying in
different planes in terms that are relevant for their convex hulls.
We determine the face lattice and the real edge curve of each type,
and show that these convex hulls are spectrahedra only
when the circles lie on a quadratic cone.

\section{Convex algebraic geometry}\label{S:background}
We review some fundamental aspects of convexity and convex algebraic geometry,
summarize our results about convex hulls of pairs of circles and their edge curves, and show that any such convex
hull is the projection of a  spectrahedron.

The convex hull of a subset $S\subset\RR^d$ is 
\[
   \defcolor{\conv(S)}\ :=\ 
   \Bigl\{ \sum_{i=1}^n \lambda_i s_i \mid  s_1,\dotsc,s_n\in S\,,\ 
                    0\leq\lambda_i\,,\ \mbox{and } 1=\sum_{i=1}^n \lambda_i\Bigr\}\,.
\]
A set $K$ is \demph{convex} if it equals its convex hull.
A point $p\in K$ is \demph{extreme} if $K\neq\conv(K\smallsetminus\{p\})$.
A compact convex set is the convex hull of its extreme points.

A convex subset $F$ of a convex set $K$ is a \demph{face} if $F$ contains the endpoints of any line segment
in $K$ whose interior meets $F$.
A \demph{supporting hyperplane} $\Pi$ is one that meets $K$ with $K$ lying in one of the half-spaces of $\RR^d$
defined by $\Pi$.
A supporting hyperplane $\Pi$ supports a face $F$ of $K$ if $F\subset K\cap\Pi$, and it 
\demph{exposes} $F$ if $F=K\cap\Pi$.

Not all faces of a convex set are exposed. 
The boundary of the convex hull of two coplanar circles in Fig.~\ref{F:CirclesInPlane}
consists of one arc on each circle and two bitangent segments.
An endpoint $p$ of an arc is not exposed.
The only line supporting  $p$ is the tangent to the circle at $p$, and this line also supports the
adjoining bitangent.
\begin{figure}[htb]
\centering
 \begin{picture}(120,60)
   \put(0,0){\includegraphics{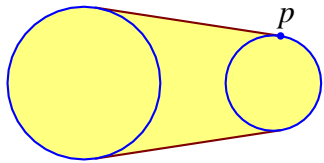}}
   \put(104,54){$p$}
 \end{picture}
 \caption{Convex hull of coplanar circles.}\label{F:CirclesInPlane}
\end{figure}

A fundamental problem from convex optimization is to describe the faces of a convex set, determining which are
exposed, as well as their lattice of inclusions (the \demph{face lattice}).
For more on convex geometry, see~\cite{barvinok}.

Convex algebraic geometry is the marriage of classical convexity with real algebraic geometry.
A real algebraic variety $X$ is an algebraic variety defined over $\RR$.
If $X$ is irreducible and contains a smooth real point, then its real points are Zariski-dense in $X$, so it is
often no loss to consider only the real points.
Conversely, many aspects of a real algebraic variety are best understood in terms of its complex points.
Studying the complex algebraic geometry aspects of a question from real algebraic geometry is its 
\demph{algebraic relaxation}.
This relaxation enables the use of powerful techniques from complex algebraic geometry to 
address the original question.

As the real numbers are ordered, we also consider \demph{semialgebraic sets}, which are defined by polynomial
inequalities.
By the Tarski--Seidenberg Theorem on quantifier elimination~\cite{Seidenberg,Tarski}, the class  of semialgebraic
sets is closed under projections and under images of polynomial maps.
A closed semialgebraic set is \demph{basic} if it is a finite intersection of sets of the form 
$\{x\mid f(x)\geq 0\}$, for $f$ a polynomial.

Motivating questions about convex algebraic geometry were raised in~\cite{orbitopes}.
A fundamental convex semialgebraic set is the cone of positive semidefinite matrices (the \demph{$\PSD$ cone}).
These are symmetric matrices with nonnegative eigenvalues.
The boundary of the $\PSD$ cone is (a connected component of) the determinant hypersurface and every face is
exposed. 
A \demph{spectrahedron} is an affine section $L\cap \PSD$ of this cone.
Write $A\defcolor{\succeq}0$ to indicate that $A \in \PSD$.
Parameterizing $L$ shows that a spectrahedron is defined by a \demph{linear matrix inequality},
\[
   \{x\in\RR^m \mid A_0+x_1A_1+\dotsb + x_mA_m \succeq 0\}\,,
\]
where $A_0,\dotsc,A_m$ are real symmetric matrices.

Images of spectrahedra under linear maps are \demph{spectrahedral shadows}.
Semidefinite programming provides efficient methods to optimize linear objective functions over spectrahedra
and their shadows, and a fundamental question is to determine if a given convex semialgebraic set may be
realized as a spectrahedron or as a spectrahedral shadow, and to give such a realization.
Scheiderer showed that the convex hull of a curve is a spectrahedral shadow~\cite{Sch12}, and 
recently showed that there are many convex semialgebraic sets which are not spectrahedra or their
shadows~\cite{Sch16}. 

Since the optimizer of a linear objective function lies in the boundary, convex
algebraic geometry also seeks to understand the boundary of a convex semialgebraic set.
This includes determining its faces and their inclusions, as well as
the Zariski closure of the boundary, called the \demph{algebraic boundary}.
This was studied for rational curves~\cite{Sinn,Vinzant} and for curves in $\RR^3$ by Ranestad
and Sturmfels~\cite{RS12}.
They showed that the algebraic boundary of a space curve $C$ consists of finitely many
tritangent planes and a ruled \demph{edge surface} composed of stationary bisecant lines.
A \demph{stationary bisecant} is a secant $\overline{x,y}$ to $C$ ($x,y\in C$) such that the tangent lines
$T_xC$ and $T_yC$ to $C$ at $x$ and $y$ meet.
For a general irreducible space curve of degree $d$ and genus $g$, the edge surface has degree
$2(d{-}3)(d{+}g{-}1)$~\cite{Johnsen,RS12}.

For example, suppose that $C$ is a general space quartic (see~\cite[Rem.~5.5]{Johnsen} or~\cite[Ex.~2.3]{RS12}).
This is the complete intersection of two real quadrics $P$ and $Q$, and has genus one by the
adjunction formula~\cite[Ex.~V.1.5.2]{hartshorne}.
Its edge surface has degree $2(4{-}3)(4{+}1{-}1)=8$ and is the union of four cones.
In the pencil of quadrics that contain $C$,  $sP+tQ$ for $[s,t]\in\PP^1$, four are singular
and are given by the roots of $\det(sP+tQ)$.
Here, the quadratic forms $P,Q$ are expressed as symmetric matrices. 
Each singular quadric is a cone and each line on that cone is a stationary bisecant of $C$.
A general point of $C$ lies on four stationary bisecants, one for each cone.

The union of two circles in different planes is also a space quartic, but it is not in general a complete
intersection (the complex points of a complete intersection are connected).
We therefore expect a different answer than for general space quartics.
We give a taste of that which is to come.

\begin{theorem}\label{Th:one}
 Let $C_1$ and $C_2$ be circles in $\RR^3$ lying in different planes.
 Their convex hull is a spectrahedron if and only if the 
 scheme $C_1\cap C_2$ has length 2.
 When the complex points of the circles are disjoint and neither is tangent to the plane of the other, the edge
 surface is irreducible and has degree eight. 
 Its rulings are parameterized by a smooth curve of genus one in $C_1\times C_2$.
 A general point of $C_1\cup C_2$ lies on two stationary bisecants.
\end{theorem}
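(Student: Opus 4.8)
The plan is to separate the complex‑projective content (the edge curve, the degree‑eight edge surface, and the count of stationary bisecants) from the real convexity content (the spectrahedron dichotomy), and to handle the former with Plücker coordinates on $\Gr(1,3)$ after complexifying $C_1,C_2$ to smooth conics in $\CC\PP^3$. Parameterize $C_1\cong C_2\cong\PP^1$ by coordinate maps $x\colon\PP^1\to\CC\PP^3$ and $y\colon\PP^1\to\CC\PP^3$ whose entries are binary quadratic forms. The tangent line $T_{x(s)}C_1$ has Plücker coordinates $x_i(s)x_j'(s)-x_j(s)x_i'(s)$, binary quadratic in $s$, and similarly $T_{y(t)}C_2$ has Plücker coordinates binary quadratic in $t$; since two lines in $\CC\PP^3$ meet exactly when the alternating bilinear pairing of their Plücker vectors vanishes, the edge curve $\cE\subset C_1\times C_2\cong\PP^1\times\PP^1$ is cut out by a single form of bidegree $(2,2)$. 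I would then check that this form is not identically zero (which would force every tangent of $C_1$ to meet every tangent of $C_2$) and that $\cE$ is reduced, and — this is the technical heart — that the two hypotheses, $C_1\cap C_2=\emptyset$ in $\CC\PP^3$ and neither conic tangent to the plane of the other, are exactly the conditions making $\cE$ smooth, each possible degeneration of $\cE$ (a node, a cusp, or a drop in bidegree splitting off a line) being matched with one of the excluded coincidences; this is the classification of bidegree‑$(2,2)$ edge curves promised in Section~\ref{S:relaxation}. Granting smoothness, $\cE$ is an anticanonical divisor on $\PP^1\times\PP^1$, so adjunction gives genus $(2-1)(2-1)=1$, and from $0\to\cO(-2,-2)\to\cO\to\cO_\cE\to 0$ with $h^0(\cO(-2,-2))=h^1(\cO)=0$ one gets $h^0(\cO_\cE)=1$, so $\cE$ is connected, hence irreducible of genus one.

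\textbf{The edge surface.} Because $C_1\cap C_2=\emptyset$, the map $(s,t)\mapsto\overline{x(s)\,y(t)}$ is a morphism $C_1\times C_2\to\Gr(1,3)$ whose Plücker coordinates $x_i(s)y_j(t)-x_j(s)y_i(t)$ are bihomogeneous of bidegree $(2,2)$ with no common factor (a common factor would force $x(s)=y(t)$ for some $s,t$). The edge surface is the union of the lines $\overline{x(s)y(t)}$ over $(s,t)\in\cE$, i.e.\ the image of the $\PP^1$‑bundle of points on these lines over $\cE$; since $\cE$ is irreducible and the family of lines is non‑constant, the edge surface is irreducible and two‑dimensional. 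Its degree is the number of ruling lines meeting a general line $\ell\subset\CC\PP^3$; "meeting $\ell$" is one linear condition on Plücker coordinates and so defines a divisor of bidegree $(2,2)$ on $C_1\times C_2$, whence the count is the intersection number $(2,2)\cdot(2,2)=8$ on $\PP^1\times\PP^1$, once one checks that the generic ruling line comes from a unique point of $\cE$ (the map $\cE\to\Gr(1,3)$ is birational onto its image) and that this divisor meets $\cE$ properly. Finally, the two projections $\cE\to C_i$ have degree $2$ since $\cE$ has bidegree $(2,2)$, and a degree‑$2$ cover of $\PP^1$ by a genus‑one curve is generically étale, so a general point of $C_1\cup C_2$ lies on exactly two stationary bisecants.

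\textbf{The spectrahedron dichotomy.} The relevant principle, recalled above, is that every face of the $\PSD$ cone is exposed, hence every face of a spectrahedron is exposed; note this is sharper than Scheiderer's spectrahedral‑shadow statement and is what the body must satisfy. For one direction I would show that when the scheme $C_1\cap C_2$ does not have length $2$ the convex hull always acquires an obstruction to being a spectrahedron — either a nonexposed face (for instance a point of one circle whose tangent meets the other circle, whose only supporting plane also supports an adjacent stationary bisecant), or, in the configurations that happen to have all faces exposed, the irreducible degree‑eight edge surface in the algebraic boundary, incompatible with the determinantal, rigidly convex structure of a spectrahedron — settled configuration‑by‑configuration in Section~\ref{S:circles}. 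For the converse, write $C_i=\{H_i=Q_i=0\}$; then $C_1\cap C_2$ lies on the line $L=\{H_1=H_2=0\}$ and has length $2$ exactly when $Q_1|_L$ and $Q_2|_L$ agree up to scale, which after rescaling places $Q_1-Q_2$ in the degree‑two part of the ideal $(H_1,H_2)$ and so produces a quadric through $C_1\cup C_2$. The pencil of quadrics through $C_1\cup C_2$ is spanned by this quadric and $H_1H_2$, and its discriminant is a quartic with a double root at $H_1H_2$, hence has a further root giving a rank‑three member — a quadratic cone $\mathcal{C}$ on which both circles lie, which for real circles meeting in a length‑two real scheme one verifies may be chosen real. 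Then $\conv(C_1\cup C_2)$ is the intersection of the convex side of $\mathcal{C}$, a Lorentz cone, with the two half‑spaces bounded by $H_1$ and $H_2$, and an affine section of a Lorentz cone is a spectrahedron.

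\textbf{Main obstacle.} I expect two places to carry the real weight. The first is proving that $\cE$ is smooth precisely under the stated hypotheses, i.e.\ the full classification of bidegree‑$(2,2)$ edge curves and the dictionary between degenerations of $\cE$ and geometric coincidences of the two conics; this underlies the genus‑one statement, the irreducibility and degree of the edge surface, and the bisecant count. The second is the spectrahedron dichotomy in the non‑length‑$2$ case: exhibiting the forced nonexposed face where one exists, and otherwise arguing from the degree‑eight algebraic boundary that no linear matrix inequality can cut out the body, together with the real‑signature bookkeeping needed to make the cone $\mathcal{C}$ and its Lorentz side explicit in the length‑$2$ case.
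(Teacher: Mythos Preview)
Your strategy matches the paper's: it too separates the complex-projective content (Lemma~\ref{L:stBis}, Theorems~\ref{Th:edgeCurve} and~\ref{Th:degreeEight}, and the singularity classification in Lemma~\ref{L:fivePossibilities}) from the spectrahedron dichotomy (Lemma~\ref{QuadricLemma} and Theorem~\ref{Th:isSectrhedron}). The differences are mostly cosmetic or in logical order. The paper proves the two-bisecant count first, by the elementary observation that for general $p\in C_1$ the tangent $T_pC_1$ meets $\Pi_2$ in a single point from which two tangents to $C_2$ emanate, and then derives the $(2,2)$ bidegree from this; you go the other way, reading off $(2,2)$ from the bidegree of the Pl\"ucker pairing and recovering the bisecant count as the degree of the projection $\cE\to C_i$. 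For degree eight the paper gives your intersection-theory argument as its second proof, but also offers a more elementary first proof: the line $\ell=\Pi_1\cap\Pi_2$ meets the edge surface exactly in the four points of $(C_1\cup C_2)\cap\ell$, each with multiplicity two. Your quadric-through-both-circles construction for the length-$2$ case is exactly Lemma~\ref{QuadricLemma}.

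The one place your sketch is genuinely softer than the paper is the order-type $\emptyset$ case of the non-spectrahedron direction. Saying the irreducible degree-eight edge surface is ``incompatible with the determinantal, rigidly convex structure of a spectrahedron'' is not by itself an argument: determinantal hypersurfaces can have arbitrary degree, and rigid convexity is a property of a chosen polynomial, not of a surface. The paper's argument (Theorem~\ref{Th:isSectrhedron}) is sharper and of a different nature: in type $\emptyset$ the real edge curve has two connected components, and the stationary bisecants parameterized by one of them pass through the \emph{interior} of $K$ (see Example~\ref{Ex:edge_curves2}), so the irreducible polynomial defining the edge surface vanishes both on $\partial K$ and inside $K$. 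Hence $K$ is not even a basic closed semialgebraic set, and therefore not a spectrahedron. You should replace your vaguer obstruction with this one.
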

\begin{proof}
 This is proven in Lemma~\ref{L:stBis}, and in
 Theorems~\ref{Th:edgeCurve},~\ref{Th:degreeEight} and ~\ref{Th:isSectrhedron}.
\end{proof}

\section{Stationary bisecants to two complex conics}\label{S:relaxation}
We study stationary bisecants and edge surfaces in the algebraic relaxation of our problem of two circles,
replacing circles in $\RR^3$ by smooth conics in $\defcolor{\PP^3}=\CC\PP^3$.


A conic $C$ in  $\PP^3$ spans a plane.
Let $C_1$ and $C_2$ be conics spanning different planes, $\Pi_1$ and $\Pi_2$, respectively.
A stationary bisecant is spanned by points $p\in C_1$ and $q\in C_2$ with $p\neq q$ whose tangent lines 
$T_pC_1$ and $T_qC_2$ meet.
Set $\defcolor{\ell}:=\Pi_1\cap\Pi_2$.

\begin{lemma}\label{L:stBis}
 A point $p\in C_1$ lies on two stationary bisecants unless the tangent line $T_pC_1$ meets
 $C_2$.
 If the tangent line meets $C_2$, then it is the unique stationary bisecant through $p$ unless
 $p\in C_2$ or $T_pC_1$ lies in the plane $\Pi_2$ of $C_2$.
 When $T_pC_1\subset\Pi_2$, the pencil of lines in $\Pi_2$ through $p$ are all stationary bisecants.
\end{lemma}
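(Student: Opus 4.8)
The plan is to fix $p\in C_1$, set $L:=T_pC_1$, and study the linear projection $\pi_L\colon\PP^3\dashrightarrow\PP^1$ away from the line $L$, whose fibres are exactly the planes of $\PP^3$ containing $L$. Since $C_2$ is a smooth curve, $\pi_L$ restricts to a morphism $C_2\to\PP^1$. The first step is the key translation: for $q\in C_2$ with $q\notin L$, the secant $\overline{p,q}$ is well defined, differs from $L$, and is a stationary bisecant if and only if $T_qC_2$ meets $L$; since $q\in T_qC_2$, this holds exactly when $T_qC_2$ is contained in the plane $H_q$ spanned by $L$ and $q$, equivalently when $H_q$ meets $C_2$ at $q$ with multiplicity at least two, equivalently when $q$ is a ramification point of $\pi_L|_{C_2}$. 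The only stationary bisecants through $p$ not of this form pass through a point of $C_2\cap L$: if $q\in C_2\cap L$ with $q\neq p$, then $\overline{p,q}=L$, and $T_pC_1=L$ and $T_qC_2$ meet at $q$, so $L$ itself is then a stationary bisecant. This reduces the whole lemma to counting ramification points of $\pi_L|_{C_2}$ while keeping track of whether $L$ meets $C_2$.

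Second, I would handle the generic case $L\cap C_2=\emptyset$. Here $L\not\subset\Pi_2$, since a line in $\Pi_2$ must meet the conic $C_2$; consequently no plane through $L$ contains $C_2$, and $\pi_L|_{C_2}\colon\PP^1\to\PP^1$ has degree two. By Riemann--Hurwitz it has exactly two ramification points $q_1,q_2$; equivalently, with $z:=L\cap\Pi_2\notin C_2$, these are the two points of $C_2$ where the tangent lines from $z$ to the plane conic $C_2$ touch. Each $q_i$ gives a stationary bisecant $\overline{p,q_i}$, and $L$ is not one since $L\cap C_2=\emptyset$; moreover $\overline{p,q_1}\neq\overline{p,q_2}$, since otherwise $H_{q_1}=H_{q_2}$ would be tangent to $C_2$ at both $q_1$ and $q_2$. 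Hence $p$ lies on exactly two stationary bisecants.

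Third, I would treat the case that $L$ meets $C_2$, in two sub-cases. If $L\subset\Pi_2$, then $C_2\subset\Pi_2$, and every line $m$ of the pencil of lines through $p$ in $\Pi_2$ meets $C_2$ in a point $q$; the tangents $T_qC_2$ and $L=T_pC_1$ then both lie in $\Pi_2$ and so meet, making $m=\overline{p,q}$ a stationary bisecant (with the single exception of $T_pC_2$ when $p\in C_2$). If instead $p\notin C_2$ and $L\not\subset\Pi_2$, then $L$ cannot be tangent to $C_2$, so $L\cap C_2$ is a single reduced point $r\neq p$, and $L=\overline{p,r}$ is a stationary bisecant by the first step; and for any other $q\in C_2$ the plane $H_q$ contains both $q$ and $r$, hence $H_q\cap C_2=q+r$ is reduced at $q$, so $\pi_L|_{C_2}$ is unramified at $q$ and $\overline{p,q}$ is not a stationary bisecant. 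Thus $L$ is the unique stationary bisecant through $p$, and the remaining possibility $p\in C_2$ is exactly the case excluded in the statement.

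The differential and intersection-multiplicity verifications underlying the first step are routine. I expect the point needing the most care to be the case $L\cap C_2\neq\emptyset$: there $\pi_L|_{C_2}$ is only a rational map before extension, and its degree drops to one precisely because the fixed intersection point $r\in L$ must be removed from every fibre divisor, so one must argue carefully that this forces $\pi_L|_{C_2}$ to be unramified away from $r$. A smaller point is verifying that the two bisecants in the generic case are genuinely distinct lines, and checking that the degenerate configurations ($p\in C_1\cap C_2$, or $p\in\ell$) fall under the stated exceptions.
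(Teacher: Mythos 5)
Your proof is correct and follows essentially the same route as the paper: the same trichotomy ($T_pC_1$ disjoint from $C_2$, meeting $C_2$ but not in $\Pi_2$, or contained in $\Pi_2$), with your Riemann--Hurwitz count of ramification points of the projection away from $L=T_pC_1$ being just a reformulation of the paper's count of the two tangent lines to $C_2$ through the point $z=T_pC_1\cap\Pi_2$. The extra detail you supply (distinctness of the two bisecants, and the $H_q\cap C_2=q+r$ argument for uniqueness when $L$ meets $C_2$) only makes explicit what the paper's proof asserts more tersely.
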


\begin{proof}
 See Fig.~\ref{F:stationaryBisecants} for reference.
\begin{figure}[htb]
\centering
  \begin{picture}(330,117)(-15,-10)
   \put(-15,-10){\includegraphics[height=117pt]{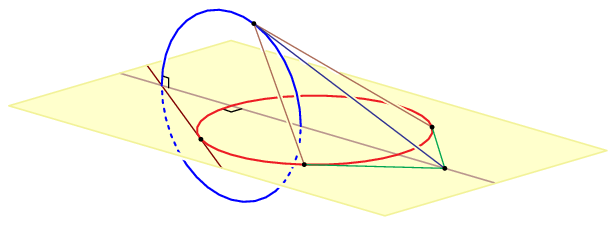}}
   \thicklines
   \put(179,87.8){{\color{white}\line(-3,-1){45}}}
   \put(179,88.2){{\color{white}\line(-3,-1){45}}}
   \put(218,62.3){{\color{white}\line(-2,-1){28}}}
   \put(218,62.7){{\color{white}\line(-2,-1){28}}}
   \put(30.5,27.8){{\color{white}\line(3,1){50}}}
   \put(30.5,28.2){{\color{white}\line(3,1){50}}}
   \thinlines
   \put(25,52){$\Pi_2$}
   \put(70,98){$C_1$}   \put(115,26){$C_2$}
   \put(123,101){$p$}  \put(228,21){$q$}  \put(247,13){$\ell$}
   \put(148,11){$r$}  \put(222,39){$s$}
   \put(181,86){stationary bisecants through $p$}
   \put(179,88){\vector(-3,-1){47}}
   \put(179,85){\vector(-1,-1){12}}
   \put(219,62){$T_p C_1$}
   \put(218,62.5){\vector(-2,-1){29}}
   \put(30.5,28){\vector(3,1){52}}
   \put(-14,19){tangent stationary}
   \put(-14, 9){bisecant}
  \end{picture}
 \caption{Stationary bisecants.}\label{F:stationaryBisecants}
\end{figure}
Consider the tangent line $T_pC_1$ for $p\in C_1$.
Either 
\[
   (i)\ T_pC_1\ \not\subset\ \Pi_2
   \qquad\mbox{or}\qquad
   (ii)\ T_pC_1\ \subset\ \Pi_2\,.
\]
In case $(i)$, let $q$ be the point where $T_pC_1$ meets $\Pi_2$.
There are further cases.
When $q\not\in C_2$, there are two tangents to $C_2$ that meet $q$, and
the lines through $p$ and each point of tangency ($r,s$ in Fig.~\ref{F:stationaryBisecants})  give two
stationary bisecants through $p$. 
If $q\in C_2$ and $p\neq q$, then the tangent line $T_pC_1$ is the only
stationary bisecant through $p$.

In case $(ii)$, the tangent line $T_pC_1$ meets every tangent to
$C_2$, and every line in $\Pi_2$ through $p$ (except $T_pC_2$ if $p\in C_2$) meets $C_2$  and is
therefore a stationary bisecant.
If $p\in C_2$, then the tangent line $T_pC_2$ is a limit of such lines.
\end{proof}

\begin{remark}
 When $C_1$ is tangent to the plane $\Pi_2$ at a point $p$, the pencil of lines in
 $\Pi_2$ through $p$ are \demph{degenerate stationary bisecants}.
 When $p\not\in C_2$, a general line in the pencil meets $C_2$ twice so that the map from $C_2$ to this pencil has
 degree two.
\end{remark}

Lines that meet $C_1$ and $C_2$ in distinct points are given by points $(p,q)\in C_1\times C_2$ with $p\neq q$. 
The \demph{edge curve $E$} is the Zariski closure of the set of points $(p,q)$ such that
$\overline{p,q}$ is a stationary bisecant.
As a smooth conic is isomorphic to $\PP^1$, the edge curve is a curve in $\PP^1\times\PP^1$.
Subvarieties of products of projective spaces have a multidegree (see~\cite[\S~2]{multidegree}).
For a curve $C$ in $\PP^1\times\PP^1$, this becomes its bidegree $(a,b)$, where $a$ is the number of points in the
intersection of $C$ with $\PP^1\times\{q\}$ for $q$ general and $b$ the number of points in the intersection of $C$
with $\{p\}\times\PP^1$ for $p$ general.
As $\PP^1\times\{q\}$ has bidegree $(0,1)$ and $\{p\}\times\PP^1$ bidegree $(1,0)$, the intersection pairing on
curves in $\PP^1\times\PP^1$, expressed in terms of bidegree, is 
 \begin{equation}\label{Eq:IntersectionPairing}
   (a,b)\cdot (c,d)\ =\ ad+bc\ \in\ \ZZ\,.
 \end{equation}
A curve of bidegree $(a,b)$ is defined in homogeneous coordinates $([s:t],[u:v])$ for $\PP^1\times\PP^1$ by a
bihomogeneous polynomial that has degree $a$ in $s,t$ and $b$ in $u,v$.

\begin{theorem}\label{Th:edgeCurve}
 The edge curve $E$ has bidegree $(2,2)$.
\end{theorem}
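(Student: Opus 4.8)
The plan is to exhibit the edge curve $E$ as the zero locus of an explicit bihomogeneous form of bidegree $(2,2)$ on $C_1\times C_2\cong\PP^1\times\PP^1$, and then to check that this form does not vanish identically. The geometric point is that assigning to a point of a smooth conic its tangent line is a degree-two map, so the incidence condition ``$T_pC_1$ meets $T_qC_2$'' is visibly quadratic in each factor; the resulting count of two points in a general fibre will agree with the two stationary bisecants through a general point furnished by Lemma~\ref{L:stBis}.

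First I would fix isomorphisms $\nu_i\colon\PP^1\to C_i\subset\PP^3$ given by quadratic forms and analyse the Gauss map $\gamma_i\colon C_i\to\Gr(2,4)$, $p\mapsto T_pC_i$. Since the tangent lines of $C_i$ are precisely the points of the dual conic $C_i^\vee$ in the dual plane $\Pi_i^\vee$, the map $\gamma_i$ is a degree-two parametrization of that conic; and since the lines contained in a fixed plane of $\PP^3$ form a linearly embedded $\PP^2$ inside the Plücker space $\PP^5\supset\Gr(2,4)$, the composite $\gamma_i\colon\PP^1\to\PP^5$ is given by forms of degree two in the parameter of $C_i$. Now two lines in $\PP^3$ meet if and only if the product $\omega_1\wedge\omega_2\in\wedge^4\CC^4\cong\CC$ of their Plücker vectors vanishes; substituting $\gamma_1(p)$ and $\gamma_2(q)$ into this pairing produces a polynomial $F(p,q)$ that has degree two in the parameter of each $C_i$, hence is of bidegree $(2,2)$, and $F(p,q)=0$ exactly when $T_pC_1$ meets $T_qC_2$. (If instead one writes the $4\times4$ determinant of vectors spanning the two tangent lines, say $\nu_1,\partial\nu_1,\nu_2,\partial\nu_2$, one obtains a form of bidegree $(3,3)$; the extra bidegree $(1,1)$ factor is spurious, reflecting the point of each parameter line where the chosen affine derivative fails to span the tangent line, and the Grassmannian formulation removes it cleanly.)

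It then remains to identify $E$ with $\{F=0\}$ and to see that $F\not\equiv0$. The locus $\{F=0\}$ is the set of all $(p,q)\in C_1\times C_2$ with $T_pC_1\cap T_qC_2\neq\emptyset$, whereas $E$ is the closure of the same set with the finite locus $\{p=q\}$ deleted; since $\{F=0\}$ is pure one-dimensional, it has no isolated points, so it agrees with the closure of its complement in that finite set, giving $E=\{F=0\}$. For $F\not\equiv0$: a tangent line of $C_1$ lies in $\Pi_1$ and a tangent line of $C_2$ lies in $\Pi_2$, so the two can meet only along $\ell=\Pi_1\cap\Pi_2$, and for general $p$ and $q$ the points $T_pC_1\cap\ell$ and $T_qC_2\cap\ell$ are distinct. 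Hence $\{F=0\}$ is a genuine curve of bidegree $(2,2)$, and it equals $E$. As a consistency check, intersecting $E$ with $\{p\}\times\PP^1$ for general $p\in C_1$ recovers exactly the two stationary bisecants through $p$ from Lemma~\ref{L:stBis}, each meeting $C_2$ in a single point (a line through a general $p\in C_1$ meeting $C_2$ twice would lie in $\Pi_2$, forcing $p\in\ell$), so the second entry of the bidegree is $2$; the first entry is $2$ by the symmetric argument.

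The step I expect to be the main obstacle is controlling the degree: the naive determinant is of bidegree $(3,3)$, and one must recognise and cancel the spurious bidegree $(1,1)$ factor, which is exactly why I would route the computation through the fact that the Gauss map of a conic has degree two. A secondary technical point is confirming that $E$ is all of $\{F=0\}$ and that $F$ is not identically zero; both reduce to the hypothesis that $C_1$ and $C_2$ span distinct planes.
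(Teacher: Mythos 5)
Your argument is correct and essentially coincides with the paper's: the Pl\"ucker pairing of the two Gauss images is exactly the paper's $4\times 4$ determinant in the homogeneous partials $\partial_s f_1,\partial_t f_1,\partial_u f_2,\partial_v f_2$, which already has bidegree $(2,2)$, and your closing ``consistency check'' (two points in a general fibre of each projection, via Lemma~\ref{L:stBis}) is precisely the paper's first proof. The only quibble is your parenthetical about a spurious $(3,3)$ determinant: that arises only if one takes the point and a single affine derivative as rows; by Euler's relation $s\partial_s f + t\partial_t f = 2f$ the two homogeneous partials already span the tangent line, so the direct determinant needs no Grassmannian detour and no cancellation.
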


\begin{proof}
 In the projection to $C_1$, two points of $E$ map to a general point $p\in C_1$, 
 by Lemma~\ref{L:stBis}.
 Thus the intersection number of $E$ with $\{p\}\times C_2$ is 2 and vice-versa for
 $C_1\times\{q\}$, for general $q\in C_2$.
 Consequently, $E$ has bidegree $(2,2)$.

 We compute the defining equation of $E$ to give a second proof.
 This begins with a parameterization of the conics.
 Let $f_{i,0},\dotsc,f_{i,3}\in H^0(\PP^1,\cO(2))$ for $i=1,2$ be two quadruples of homogeneous quadrics that each
 span $H^0(\PP^1,\cO(2))$.
 Each quadruple gives a map $f_i\colon\PP^1\to\PP^3$ whose image is a conic $C_i$.
 The plane $\Pi_i$ of $C_i$ is defined by the linear relation among $f_{i,0},\dotsc,f_{i,3}$, and we assume that
 $\Pi_1 \neq \Pi_2$. 

 In coordinates, if $[s:t]\in\PP^1$, then the image 
\[
  f_i[s:t]\ =\ [f_{i,0}(s,t):f_{i,1}(s,t):f_{i,2}(s,t):f_{i,3}(s,t)]\,
\]
 is the corresponding point of $C_i$.
 Its tangent line is spanned by 
 $\partial_s f_i$ and $\partial_t f_i$, where $\partial_x:=\frac{\partial}{\partial x}$, as 
 $s\partial_s f + t \partial_t f=2f$, for a homogeneous quadric $f$.
 The points $f_1[s:t]$ and $f_2[u:v]$ span a stationary bisecant when their tangents meet.
 Equivalently, when
 \begin{equation}\label{Eq:edgeCurve}
  E(s,t,u,v)\ :=\ 
    \det \begin{pmatrix}
         \partial_s f_{1,0} &\ \partial_s f_{1,1} &\ \partial_s f_{1,2} &\ \partial_s f_{1,3} \\\rule{0pt}{11pt}
         \partial_t f_{1,0} &\ \partial_t f_{1,1} &\ \partial_t f_{1,2} &\ \partial_t f_{1,3} \\\rule{0pt}{11pt}
         \partial_u f_{2,0} &\ \partial_u f_{2,1} &\ \partial_u f_{2,2} &\ \partial_u f_{2,3} \\\rule{0pt}{11pt}
         \partial_v f_{2,0} &\ \partial_v f_{2,1} &\ \partial_v f_{2,2} &\ \partial_v f_{2,3} 
    \end{pmatrix}\ =\ 0\,.
 \end{equation}
 As the first two rows have bidegree $(1,0)$ and the second two have bidegree
 $(0,1)$, this form $E(s,t,u,v)$ has bidegree $(2,2)$.
\end{proof}

\begin{example}\label{Ex:edge_curves}
 Suppose that  $C_1$ and $C_2$ are the unlinked unit circles where 
 $C_1$ is centered at the origin and lies in the $xy$-plane and $C_2$ is centered at $(3,0,0)$ and lies
 in the $xz$-plane.
 If we choose homogeneous coordinates $[X_0:X_1:X_2:X_3]$ for $\PP^3$ where
 $(x,y,z)=\frac{1}{X_0}(X_1,X_2,X_3)$, then these admit parametrizations
\[
   [s:t]\ \longmapsto\ [s^2+t^2: s^2-t^2 : 2st:0]
   \quad\mbox{and}\quad
   [u:v]\ \longmapsto\ [u^2+v^2: 2u^2+4v^2 : 0: 2uv]\,.
\]
 Dividing the determinant~\eqref{Eq:edgeCurve} by $-16$ gives the equation for the edge curve $E$,
\[
  s^2u^2\ -\ 3 s^2 v^2\ -\ 3t^2 u^2\ +\ 5 t^2 v^2\,,
\]
%
%
which is irreducible.
We draw $E$ below in the window $|s/t|,|u/v|\leq 5$ in $\RR\PP^1\times\RR\PP^1$.
 \begin{equation}\label{Eq:unLinked_wide_curve}
   \raisebox{-30pt}{\includegraphics[height=70pt]{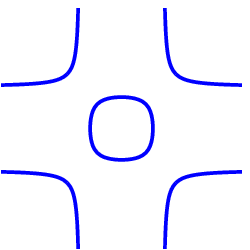}}
 \end{equation}

\end{example}

The Zariski closure of the union of all stationary bisecants is the ruled \demph{edge surface $\cE$}.
By Lemma~\ref{L:stBis}, a general point of one of the conics lies on two stationary bisecants.
Therefore, each conic is a curve of self-intersections of $\cE$, and the multiplicity of $\cE$ at a general point
of a conic is $2$.

\begin{theorem}\label{Th:degreeEight}
 The edge surface $\cE$ has degree eight when $C_1\cap C_2=\emptyset$ and neither is tangent to the plane of the
 other.  
\end{theorem}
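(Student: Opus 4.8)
The plan is to compute the degree of the edge surface $\cE$ by intersecting it with a generic line in $\PP^3$ and counting points, using the parameterization of $\cE$ by the edge curve $E$. Recall from Theorem~\ref{Th:edgeCurve} that $E\subset C_1\times C_2\cong\PP^1\times\PP^1$ is a curve of bidegree $(2,2)$, and each point $(p,q)\in E$ gives the stationary bisecant line $\overline{p,q}$. This defines a rational map $\varphi\colon E\dashrightarrow\Gr(1,3)$ (or directly a ruled surface map $E\times\PP^1\dashrightarrow\PP^3$ sending $((p,q),\lambda)$ to the point $\lambda p+(1-\lambda)q$ on the line). The degree of the image surface $\cE$ equals the number of lines $\overline{p,q}$, with $(p,q)\in E$, that meet a fixed generic line $M\subset\PP^3$, divided by the degree of the parameterization onto $\cE$. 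So the two tasks are: (1) count the incidence $\#\{(p,q)\in E : \overline{p,q}\cap M\neq\emptyset\}$, and (2) argue the parameterization $E\to\cE$ is birational onto its image under the stated hypotheses, so the degree of $\cE$ is exactly this count.

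For task (1), I would use the geometry of lines meeting a fixed line. Fix a generic line $M$. The condition that $\overline{p,q}$ meets $M$ is one algebraic condition on $(p,q)$; concretely, choosing coordinates so that $M$ is a coordinate line, the $2\times 2$ minor of the matrix with rows $p=f_1[s:t]$ and $q=f_2[u:v]$ corresponding to the two "extra" coordinates must vanish. Since $p$ has coordinates that are quadrics in $(s,t)$ and $q$ has coordinates quadric in $(u,v)$, this minor is bihomogeneous of bidegree $(2,2)$ in $([s:t],[u:v])$, defining a curve $D_M$ of bidegree $(2,2)$ in $\PP^1\times\PP^1$. Then by the intersection pairing~\eqref{Eq:IntersectionPairing}, the number of points of $E\cap D_M$ is $(2,2)\cdot(2,2)=2\cdot 2+2\cdot 2=8$. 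One must check that for generic $M$ this intersection is transverse (no points at which $E$ and $D_M$ are tangent, and no component of $E$ contained in $D_M$)---this uses that $E$ is reduced of bidegree $(2,2)$ and that the family of curves $D_M$, as $M$ varies, moves enough to be generically transverse to $E$.

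For task (2), the ruled surface $\cE$ is swept by the lines $\overline{p,q}$, and the parameterization by $E$ fails to be birational onto its image precisely when infinitely many pairs $(p,q)\in E$ give the same line---which happens, for example, if a fixed line meets both conics in a way that produces a one-parameter family of stationary bisecants, or if $\cE$ degenerates to a cone with several rulings through the vertex, or if the plane $\Pi_1$ or $\Pi_2$ is itself a component. The hypotheses $C_1\cap C_2=\emptyset$ and neither conic tangent to the plane of the other rule out the degenerate behaviour described in Lemma~\ref{L:stBis} (the pencils of degenerate stationary bisecants), so a general stationary bisecant meets each conic in exactly one point and determines $(p,q)$ uniquely; hence $E\to\cE$ is birational. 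Finally, since each conic is a double curve of $\cE$ (a general point of $C_i$ lies on two stationary bisecants, so the two conics do not artificially inflate the naive count), no correction is needed and $\deg\cE=8$.

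The main obstacle I expect is task (2): ruling out that $\cE$ is traced with multiplicity, i.e.\ proving the parameterization $E\to\cE$ is generically one-to-one, and simultaneously confirming that the bidegree-$(2,2)$ intersection count is not diminished by $E$ sharing a component with some $D_M$ or by base points of the ruled-surface map. Handling irreducibility of $\cE$ (also claimed) fits here too: since $E$ is a genus-one smooth curve in the relevant cases (by Theorem~\ref{Th:one}, proven later), it is in particular irreducible, so its image $\cE$ is irreducible; but one still needs that $E$ is not, say, a union of two $(1,1)$ curves in some special configurations allowed by the hypotheses---this is exactly where the classification of which bidegree-$(2,2)$ curves occur, together with the hypotheses excluding tangency and intersection, does the work. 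I would organize the argument so that irreducibility and smoothness of $E$ are invoked as a black box, reducing everything to the transversality count $(2,2)\cdot(2,2)=8$ plus the birationality statement.
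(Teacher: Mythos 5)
Your argument is correct and is essentially the paper's own (second) proof: your incidence curve $D_M$ of pairs $(p,q)\in C_1\times C_2$ whose secant meets a fixed general line is exactly the bidegree-$(2,2)$ curve $M$ used in the paper (computed there geometrically, via the cone over $C_2$ with vertex $p$ meeting the line twice, rather than via your $2\times 2$ minor), and the conclusion is the same pairing count $(2,2)\cdot(2,2)=8$. The genericity and birationality caveats you flag are left implicit in the paper (its remark after the theorem notes precisely when the count drops, namely for degenerate stationary bisecants, which the hypotheses exclude), and the paper additionally gives a shorter first argument intersecting $\cE$ with the line $\Pi_1\cap\Pi_2$, which meets $\cE$ in exactly four points, each of multiplicity two.
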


\begin{proof}
 The line $\ell=\Pi_1\cap\Pi_2$ meets each conic in two points and therefore meets $\cE$ in
 at least four points. 
 Any other point $\defcolor{r}\in\ell\cap\cE$ lies on a stationary bisecant $m$ between a point
 $p$ of $C_1$ and a point $q$ of $C_2$. 
 As $p,r\in \Pi_1$, we have $m\subset\Pi_1$, and similarly $m\subset\Pi_2$.
 Thus $m=\ell$, but $\ell$ is not a stationary bisecant, a contradiction.

 Each of the four points of $\ell\cap\cE$ has multiplicity two on $\cE$ by
 Lemma~\ref{L:stBis} and the observation preceding the statement of the theorem.
 Thus, $\cE$ has degree eight.

 We give a second proof.
 Let \defcolor{$m$} be a general line that meets $\cE$ transversally.
 The points of $m\cap\cE$ lie on stationary bisecants that meet $m$.
 We count these using intersection theory.
 Let $\defcolor{M}\subset C_1\times C_2$ be the curve whose points are pairs $(p,q)$ such that the secant
 line spanned by $p$ and $q$ meets $m$.
 Stationary bisecants that meet $m$ are 
 points of intersection of $M$ and the edge curve $E$.
 We compute the bidegree of $M$.

 Fix a point $p\in C_1$ with $p\not\in\Pi_2$.
 Secant lines through $p$ rule the cone over $C_2$ with vertex $p$.
 As this cone meets $m$ in two points, we have $\deg(M\cap\{p\}\times C_2)=2$.
 The symmetric argument with a point of $C_2$ shows that $M$ has bidegree $(2,2)$.
 By~\eqref{Eq:IntersectionPairing},  $M$ meets $E$ in 
 $(2,2)\cdot(2,2)=4+4=8$ points.
 This proves the theorem. 
\end{proof}

The arguments in this proof using intersection theory are similar to arguments used in the
contributions~\cite{multidegree,congruences} in this volume.

\begin{remark}
 Each irreducible component $C$ of the edge curve $E$ gives an algebraic family of stationary bisecants and an
 irreducible component \defcolor{$\cC$} of the edge surface $\cE$.
 If $C$ has bidegree $(a,b)$, then the corresponding component $\cC$ of $\cE$ has degree 
 at most $(2,2)\cdot(a,b)=2(a+b)$.
 This is not an equality when the intersection $M\cap E$ has a basepoint 
 or when the general point of $\cC$ contains two stationary bisecants.
 This occurs when one circle is tangent to the plane of the other and there are one or more
 components of degenerate stationary bisecants.
\end{remark}

\begin{example}\label{Ex:edge_curves2}
 The real points of the edge curve~\eqref{Eq:unLinked_wide_curve} of Example~\ref{Ex:edge_curves} 
 had two connected components (the picture showed a patch of $\RR\PP^1\times\RR\PP^1$).
 Thus, the set of real points of the edge surface has two components.
 Stationary bisecants corresponding to the oval in the center of~\eqref{Eq:unLinked_wide_curve} lie along
 the convex hull, which shown on the left below.
 The others bound a nonconvex set that lies inside the convex hull.
 We display it in an expanded view on the right below.
%
%
\[
   \includegraphics[height=85pt]{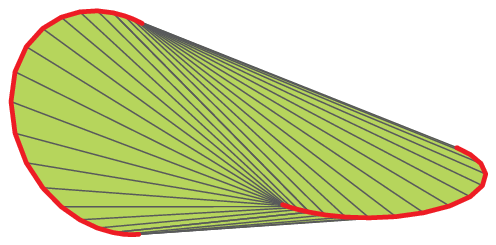}\qquad\qquad
   \includegraphics[height=85pt]{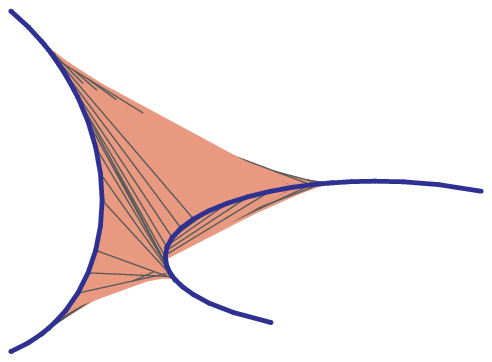}
\]
The planes of the circles meet in the $x$-axis.
For sufficiently small $\epsilon>0$, the line defined by $y=z=\epsilon$ meets $\cE$ transversally.
Near each point of a circle lying on the $x$-axis it meets $\cE$ in two points, one for each of the two 
families of stationary bisecants passing through the nearby arc of the circle.
These eight points are real.
%
%
\end{example}

A curve of bidegree $(2,2)$ on $\PP^1\times\PP^1$ has arithmetic genus one, by the adjunction
formula~\cite[Ex.~V.1.5.2]{hartshorne}.
If smooth, then it is an irrational genus one curve.
Another way to see this is that the projection to a $\PP^1$ factor is two-to-one, except over the branch 
points, of which there are four, counted with multiplicity.
Indeed, writing its defining equation as a quadratic form in the variables $(u,v)$ for the second $\PP^1$ factor, its
coefficients are quadratic forms in the variables $s,t$ of the first $\PP^1$.
The projection to the first has branch points where the discriminant vanishes, which is a quartic form.
By elementary topology, a double cover of $\CC\PP^1$ with four branch points has Euler characteristic zero, again
implying that it has genus one.

\begin{lemma}\label{L:any4points}
 For every set $S$ of four points of $C_1$, there is a conic $C_2$ such that the projection to $C_1$ of the
 edge curve is branched over $S$.
\end{lemma}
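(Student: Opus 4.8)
The plan is to recast the phrase ``branch point of the projection $E\to C_1$'' in terms of the tangent lines of $C_1$, and then to build $C_2$ by hand. First I would fix $C_1$ spanning $\Pi_1$ and analyse the degree-two map $E\to C_1$ using Lemma~\ref{L:stBis}: for a general $p\in C_1$ the two stationary bisecants through $p$ join $p$ to the two contact points $r,s\in C_2$ of the tangents to $C_2$ drawn from the point $q=T_pC_1\cap\Pi_2$, and these give the two points $(p,r),(p,s)$ of the fibre over $p$. They coincide exactly when $r=s$, i.e.\ when $q\in C_2$; since $T_pC_1\subset\Pi_1$, this means precisely that $T_pC_1$ meets $C_2$, necessarily in a point of $C_2\cap\ell$. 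So the first step records the criterion: \emph{$p\in C_1$ is a branch point of $E\to C_1$ if and only if $T_pC_1$ passes through a point of $C_2\cap\ell$.} (The exceptional case $T_pC_1\subset\Pi_2$ of Lemma~\ref{L:stBis} will be excluded by the construction below.)

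Granting the criterion, it remains to build, from the four given points $S=\{p_1,p_2,p_3,p_4\}\subset C_1$, a smooth conic $C_2$ in some plane $\Pi_2\ne\Pi_1$ whose intersection with $\ell=\Pi_1\cap\Pi_2$ is a pair $\{a,b\}$ such that the two tangents to $C_1$ through $a$ touch at $p_1,p_2$ and the two through $b$ touch at $p_3,p_4$. I would simply take $a:=T_{p_1}C_1\cap T_{p_2}C_1$ and $b:=T_{p_3}C_1\cap T_{p_4}C_1$. Because a smooth plane conic has at most two tangent lines through a point of its plane and only one through a point of the conic, and because the four lines $T_{p_i}C_1$ are pairwise distinct, one gets $a\neq b$, $a,b\notin C_1$, and that the tangents from $a$ (resp.\ from $b$) touch $C_1$ in exactly $p_1,p_2$ (resp.\ $p_3,p_4$). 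Then set $\ell:=\overline{a,b}$, choose any plane $\Pi_2\neq\Pi_1$ containing $\ell$, and choose any smooth conic $C_2\subset\Pi_2$ through $a$ and $b$; such conics form a three-dimensional linear system whose general member is smooth. By construction $\Pi_1\cap\Pi_2=\ell$ and $C_2\cap\ell=\{a,b\}$, so the criterion identifies the branch locus of $E\to C_1$ as $\{p_1,p_2\}\cup\{p_3,p_4\}=S$; since the map has degree two and $\#S=4$, each point of $S$ is then a simple branch point and $E$ is smooth. Finally, one can arrange in addition that $C_1\cap C_2=\emptyset$ and that neither conic is tangent to the plane of the other: $C_1\cap C_2\subseteq C_1\cap\ell$, and $C_1\cap\ell$ consists of two points of $C_1$ different from $a,b$, since $\ell$ is the polar of the point $\overline{p_1,p_2}\cap\overline{p_3,p_4}$, which does not lie on $C_1$, so $C_1\cap\ell$ is disjoint from $C_2\cap\ell=\{a,b\}$; the remaining conditions hold for generic choices of $\Pi_2$ and $C_2$.

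I do not expect a genuine obstacle here; the part that needs care is the bookkeeping in the first paragraph — verifying from Lemma~\ref{L:stBis} that the fibre of $E\to C_1$ over a general $p$ is genuinely two reduced points, which collapse exactly when $T_pC_1$ meets $C_2$, and then confirming that the construction introduces no spurious branch point and omits none of $S$. All of this is controlled by the hypothesis that the four points of $S$ are distinct, via the elementary fact about tangent lines to a smooth conic used above.
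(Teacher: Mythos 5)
Your proposal is correct and follows essentially the same route as the paper: partition $S$ into two pairs, intersect the tangents at each pair to get two points of $\Pi_1$, and take $C_2$ to be a conic in another plane through $\ell$ meeting $\Pi_1$ exactly in those two points, so that Lemma~\ref{L:stBis} forces each point of $S$ to carry a unique stationary bisecant and hence be a branch point. You merely spell out more details than the paper (the explicit branch-point criterion, the exclusion of the degenerate cases, and that no extra branch points arise), which matches the paper's accompanying Remark~\ref{R:families}.
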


\begin{proof}
 Let $p$ be the point of intersection of two of the tangents to $C_1$ at points of $S$ and $q$ be the point of
 intersection of the other two tangents (see Fig.~\ref{F:jInvariant}).
 Since the tangent $T_sC_1$ at any point $s\in S$ meets $C_2$ (in one of the points $p$ or $q$),
 Lemma~\ref{L:stBis} implies this is the unique stationary bisecant involving the point $s$.
 Thus, the points of $S$ are branch points of the projection to $C_1$ of the edge curve.
\end{proof}

\begin{figure}[htb]
 \centering
 \begin{picture}(265,100)
    \put(0,0){\includegraphics[height=100pt]{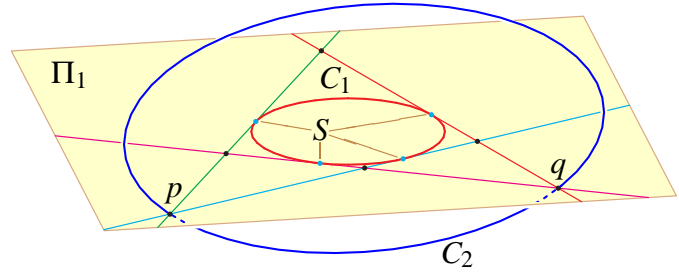}}
     \put(120,66){$C_1$}  \put(18,68){$\Pi_1$}
     \put(61,25){$p$}      \put(207,34){$q$}

     \put(117,46.5){$S$}

     \put(119.7,39.3){{\color{brown}\line(0,1){6}}}
     \put(149,39.8333){{\color{brown}\line(-3,1){25.6}}}
     \put(160,55.8333){{\color{brown}\line(-6,-1){36.6}}}
     \put( 97.5,53.1666){{\color{brown}\line(6,-1){18.9}}}
     \put(166, 0){$C_2$}
 \end{picture}
 \caption{Conic giving specified branch points.}\label{F:jInvariant}
\end{figure}

\begin{remark}\label{R:families}
 There are three families of conics $C_2$ giving an edge curve branched over $S$.
 These correspond to the three partitions of $S$ into two parts of size two.
 Each partition determines two points $p,q$ on the plane $\Pi_1$ of $C_1$ where the tangent lines at the points in
 each part meet.  
 The corresponding family is the collection of conics $C_2$ that meet $\Pi_1$ transversally in $p$ and $q$.

 If both $C_1$ and $S\subset C_1$ are real and we choose an affine $\RR^3$ containing the points $p$ and
 $q$, then we may choose $C_2$ to be a circle.
\end{remark}

The isomorphism  class of a complex smooth genus one curve is determined by its
$j$-invariant~\cite[\S~IV.4]{hartshorne}. 
This may be computed from the branch points $S$ of any degree two map to $\PP^1$.
Explicitly, if we choose coordinates on $\PP^1$ so that the branch points $S$ are 
$\{0,1,\lambda,\infty\}$, then the $j$-invariant is
\[
    2^8\cdot\frac{( \lambda^2-\lambda+1)^3}{\lambda^2(\lambda-1)^2}\,.
\]
We have the following corollary of Lemma~\ref{L:any4points}.

\begin{theorem}
 For every conic $C_1$ and every $J\in\CC$, there is a conic $C_2$ such that the edge curve has
 $j$-invariant $J$.
 When $C_1$ and $S\subset C_1$ are real, $C_2$ may be a circle.
\end{theorem}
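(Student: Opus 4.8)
The plan is to deduce this theorem directly from Lemma~\ref{L:any4points} together with the explicit formula for the $j$-invariant in terms of branch points. The key point is that Lemma~\ref{L:any4points} lets us prescribe \emph{any} four points $S$ of $C_1$ as the branch locus of the projection $E\to C_1$, and the paragraph preceding the theorem records that this branch locus determines the genus one curve $E$ and hence its $j$-invariant via the cross-ratio $\lambda$ of the four points.

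First I would recall that $j$ defines a surjection from the moduli of four-point subsets of $\PP^1$ (equivalently, from $\lambda\in\CC\setminus\{0,1\}$) onto $\CC$: given $J\in\CC$, the equation $2^8(\lambda^2-\lambda+1)^3 = J\,\lambda^2(\lambda-1)^2$ is a nontrivial polynomial in $\lambda$ (degree six in $\lambda$, with nonzero leading coefficient $2^8$ unless $J=2^8$, and even then it has a solution), so it has a root $\lambda_0\notin\{0,1,\infty\}$. Then choose $S=\{0,1,\lambda_0,\infty\}\subset\CC\PP^1$, and transport this to four points of $C_1$ under the isomorphism $C_1\cong\PP^1$ (a smooth conic is isomorphic to $\PP^1$, as noted in Section~\ref{S:relaxation}). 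By Lemma~\ref{L:any4points}, there is a conic $C_2$ whose edge curve $E$ has projection to $C_1$ branched exactly over $S$. Since $E$ is a double cover of $\PP^1$ branched over four points, it is a smooth genus one curve whose $j$-invariant is computed from $S$ by the displayed formula, hence equals $J$ by construction.

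For the second sentence, when $C_1$ and $S$ are real one invokes Remark~\ref{R:families}: partitioning $S$ into two pairs produces two points $p,q$ in the plane $\Pi_1$ where the relevant tangent lines to $C_1$ meet, and one chooses an affine chart of $\PP^3$ containing $p$ and $q$ and a \emph{circle} $C_2$ meeting $\Pi_1$ transversally in $p$ and $q$; the same argument as in Lemma~\ref{L:any4points} shows the edge curve of $C_1,C_2$ is branched over $S$. One small subtlety: to realize a prescribed real $J$ this way, one needs the four points $S$ to be realizable as a real subset of $C_1$, which forces a constraint on $\lambda_0$ (real, or a complex conjugate pair). I would handle this by noting that for $J\in\RR$ the sextic has real coefficients, so it has a root that is either real or occurs in a conjugate pair, and a conjugate pair of points together with two real points (or two conjugate pairs) still forms a $\mathrm{Gal}(\CC/\RR)$-stable four-point set, which can be taken on the real conic $C_1$; the cross-ratio of such a set is real, matching $J\in\RR$.

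I expect the only genuine obstacle to be this reality bookkeeping in the last step --- verifying that every real value of $J$ is attained by a $\mathrm{Gal}$-stable four-point configuration on a real conic, and that the construction of the real circle $C_2$ in Remark~\ref{R:families} indeed goes through for such a configuration. The complex statement itself is essentially immediate from Lemma~\ref{L:any4points} and the surjectivity of the $j$-function. If the reality argument proves delicate, a fallback is to restrict attention to $S$ consisting of four real points (cross-ratio $\lambda_0\in\RR$), which already covers a nonempty open set of $j$-values, and then argue by a continuity/connectedness argument that the real locus swept out is all of $\RR$; but I believe the direct conjugacy argument above suffices.
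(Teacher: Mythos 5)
Your proposal is correct and follows essentially the same route as the paper, which states this result as a direct corollary of Lemma~\ref{L:any4points} (prescribing the branch locus $S$), the surjectivity of the $j$-invariant in the cross-ratio $\lambda$, and Remark~\ref{R:families} for realizing $C_2$ as a circle when $C_1$ and $S$ are real. Note that the paper's second sentence only asserts the circle realization under the hypothesis that $S$ is already real, so the extra reality bookkeeping you worry about (attaining every real $J$ via Galois-stable configurations) goes beyond what the statement claims.
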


We now classify the possible edge curves $E$ to a pair of conics $C_1$ and $C_2$ lying in distinct
planes $\Pi_1$ and $\Pi_2$.
By Lemma~\ref{L:fivePossibilities}, every component of $E$ is reduced.
If $E=F\cup G$ is reducible, then 
we have that 
$(2,2)=\mbox{bidegree}(F)+\mbox{bidegree}(G)$.
Thus, the bidegrees of the components of $E$ form a partition of $(2,2)$.
If $E$ is irreducible, then either it is smooth of genus one or singular of arithmetic genus one and hence
rational.
Any curve of bidegree $(1,a)$ or $(a,1)$ is rational.
Table~\ref{T:components} gives the different possibilities, along with pictures of a real curve.
\begin{table}[htb]
 \begin{tabular}{|c|c|c|c|}\hline
  smooth (generic) & nodal rational & cuspidal & \small$2(1,0)+2(0,1)$\rule{0pt}{10pt}\\
   \includegraphics[width=60pt]{pictures/unLinked_wide_curve}\rule{0pt}{62pt}&
   \includegraphics[width=60pt]{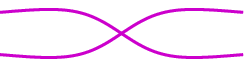}&
  \includegraphics[width=60pt]{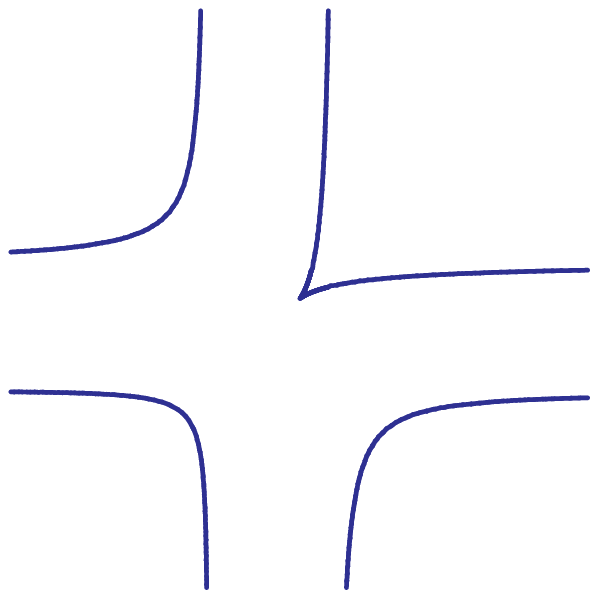}&
   \includegraphics[width=60pt]{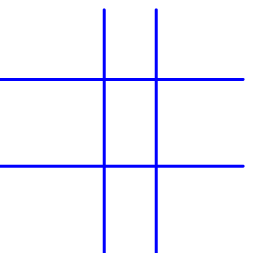}\\\hline
\end{tabular}

 \begin{tabular}{|c|c|c|c|c|}\hline
   $2(1,1)$ &$(2,1)+(0,1)$&$(2,1)+(0,1)$& $(1,1)+(0,1)$&  $(1,1)+(0,1)$ \rule{0pt}{10pt}\\
   &&&$\ \ \ +(1,0)$&$\ \ \ +(1,0)$\\
   \includegraphics[width=60pt]{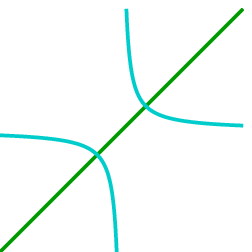}\rule{0pt}{62pt}&
   \includegraphics[width=60pt]{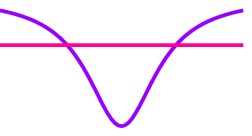}&
   \includegraphics[width=60pt]{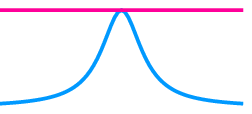}&
   \includegraphics[width=60pt]{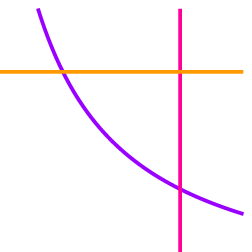}&
   \includegraphics[width=60pt]{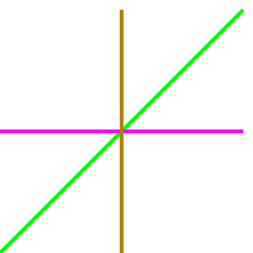}\\\hline
 \end{tabular}
\caption{Types of $(2,2)$-curves.}\label{T:components}
\end{table}

\begin{theorem}\label{Th:edgeCurves}
 All types of $(2,2)$-curves of Table~\ref{T:components} occur as the edge curve of a pair of conics
 $C_1,C_2$ lying in distinct planes except a curve with a cusp and a reducible curve 
 $2(1,0)+2(0,1)$ with four components.
\end{theorem}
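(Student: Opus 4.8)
The plan is to handle the occurring cases and the two forbidden cases separately. The core tool is Lemma~\ref{L:stBis} together with the explicit determinantal equation~\eqref{Eq:edgeCurve} for the edge curve $E$, and the description in Remark~\ref{R:families} of how specifying the branch points on $C_1$ constrains $C_2$.

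\textbf{Realizing the occurring types.} For each of the remaining rows of Table~\ref{T:components}, I would exhibit an explicit pair of conics. The smooth generic case is already Example~\ref{Ex:edge_curves}. For the nodal rational curve, I would perturb that example (or take two circles meeting in two complex-conjugate non-real points, or a figure-eight configuration) and check via~\eqref{Eq:edgeCurve} that the discriminant quartic in $s,t$ acquires a double root, forcing an arithmetic-genus-one singular, hence rational, curve with a node; genericity of the remaining data keeps it irreducible. The double conic $2(1,1)$ should arise when $C_1$ and $C_2$ lie on a common quadric cone (the spectrahedral case of Theorem~\ref{Th:one}): there the pencil-of-quadrics picture degenerates, every stationary bisecant is a ruling of the cone, and the two-to-one structure collapses so that $E$ is a $(1,1)$-curve doubled. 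The reducible types with a $(0,1)$ or a $(1,0)$ factor correspond exactly to one circle being tangent to the plane of the other: by Lemma~\ref{L:stBis}, if $C_1$ is tangent to $\Pi_2$ at a point $p$, the whole pencil of lines in $\Pi_2$ through $p$ consists of degenerate stationary bisecants, contributing a factor $\{p\}\times C_2$ of bidegree $(0,1)$ to $E$; the residual curve has bidegree $(2,1)$, and it splits further as $(1,1)+(1,0)$ precisely when additionally $C_2$ is tangent to $\Pi_1$ (a symmetric tangency, the ``hinged'' configuration). For each such case I would write down circles in $\RR^3$ realizing the stated tangency pattern and read off the factorization from~\eqref{Eq:edgeCurve}; these are the configurations pictured in Fig.~\ref{Fig:firstTaste} and Fig.~\ref{F:stationaryBisecants}. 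The $2(1,1)$ versus $(2,1)+(0,1)$ variants in the table (there are two pictures of $(2,1)+(0,1)$ and two of $(1,1)+(0,1)+(1,0)$) differ only in the reality/position of intersection points, so a small real deformation of a single algebraic family produces both.

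\textbf{Excluding the cusp.} Here the argument is via the branch-point description. If $E$ were irreducible with a cusp, its projection to $C_1$ would still be a degree-two cover branched over four points counted with multiplicity, but a cuspidal arithmetic-genus-one curve forces the branch divisor to have a point of multiplicity $\geq 3$ (three branch points colliding), or equivalently the discriminant quartic of~\eqref{Eq:edgeCurve}, viewed as a binary form in $s,t$, has a triple root. I would argue this cannot happen: near such a point the tangent-line incidence geometry of Lemma~\ref{L:stBis} degenerates in a way that either makes $T_pC_1$ lie in $\Pi_2$ (pushing us into the tangent-to-the-plane cases, where $E$ is reducible, contradicting irreducibility) or forces $p\in C_2$, again a degenerate configuration inconsistent with a single cusp on an otherwise irreducible $(2,2)$-curve. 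Making this rigorous is the step I expect to be the main obstacle: one must rule out a triple root of the discriminant by a local analysis of the determinant~\eqref{Eq:edgeCurve}, or alternatively invoke that the family of edge curves obtained as $C_2$ varies (for fixed $C_1$ and fixed three of the four branch points, per Remark~\ref{R:families}) sweeps out enough of the moduli of binary quartics that the cuspidal stratum is genuinely missed, not merely generically missed.

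\textbf{Excluding $2(1,0)+2(0,1)$.} A curve of this type is the union of two fibers $\{p_1\}\times C_2$, $\{p_2\}\times C_2$ over $C_1$ and two fibers $C_1\times\{q_1\}$, $C_1\times\{q_2\}$ over $C_2$. A factor $\{p\}\times C_2$ in $E$ means \emph{every} secant from $p\in C_1$ to $C_2$ is stationary; by Lemma~\ref{L:stBis} this forces $T_pC_1\subset\Pi_2$, i.e. $C_1$ is tangent to $\Pi_2$ at $p$. A smooth conic $C_1$ is tangent to a given plane $\Pi_2\neq\Pi_1$ at \emph{at most one} point (the tangency point is where $C_1$ meets the line $\ell=\Pi_1\cap\Pi_2$ with multiplicity two, and a conic meets a line in at most one double point unless the line is a component, impossible here). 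Hence $E$ can contain at most one vertical ruling $\{p\}\times C_2$, and symmetrically at most one horizontal ruling; so the four-ruling configuration $2(1,0)+2(0,1)$ is impossible. This completes the classification.
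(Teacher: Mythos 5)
Your exclusion of $2(1,0)+2(0,1)$ is essentially the paper's argument (a $(1,0)$-component forces $T_pC_1=\Pi_1\cap\Pi_2\subset\Pi_2$, and a conic can be tangent to that line at most once), and your existence discussion is in the same spirit as the paper, which simply exhibits every remaining type in its gallery of configurations (Tables~\ref{T:GalleryI}--\ref{T:GalleryIII}). One small but real error there: the type $2(1,1)$ is \emph{not} ``a $(1,1)$-curve doubled''---the edge curve is always reduced (Lemma~\ref{L:fivePossibilities})---it is realized by two \emph{distinct} $(1,1)$-components, e.g.\ for circles meeting in two points (order type $(S,S)$).

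The genuine gap is the cusp exclusion, which you yourself flag as the main obstacle. Reducing a cusp to a triple root of the discriminant quartic is fine in principle, but the dichotomy you then propose---that the degeneration forces either $T_pC_1\subset\Pi_2$ (hence reducibility) or $p\in C_2$, ``a degenerate configuration inconsistent with an irreducible $(2,2)$-curve''---fails in its second horn: a shared point $p\in C_1\cap C_2$ is not a contradiction at all. Those are exactly the configurations in which the irreducible \emph{nodal} edge curves of Table~\ref{T:components} acquire their singularities (order types $(1,2,S)$, $(1,S,2)$, $(2c)$, $(1,2,1)$, \dots), so the entire content of the cusp exclusion is to show that the singularity at such a shared point is a node and never a cusp; your sketch never addresses this. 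The paper closes precisely this point: Lemma~\ref{L:fivePossibilities} shows an irreducible edge curve can be singular only at a point $(p,p)$ with $p\in C_1\cap C_2$ and neither tangent lying in the plane of the other conic, and then the local computation~\eqref{Eq:newEdgeCurve}, parametrizing both conics near $p$ as in~\eqref{Eq:suParametrization}, shows the lowest-order part of the defining equation is $\beta u^2-bs^2$ with $b\beta\neq 0$, i.e.\ two distinct tangent directions, hence a node. Some such local analysis (or an equivalent computation of the order of vanishing of the discriminant at the shared point) is indispensable; without it your treatment of the cuspidal case is a restatement of the claim rather than a proof.
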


For existence, see Tables~\ref{T:GalleryI},~\ref{T:GalleryII} and~\ref{T:GalleryIII}, which display edge curves of
two circles in all possible configurations.
We rule out edge curves with a cusp and reducible edge curves of type $2(1,0)+2(0,1)$.
We first analyze the singularities of edge curves.

\begin{lemma}\label{L:fivePossibilities}
 The edge curve $E$ is reduced.
 A point $(p,q)\in C_1\times C_2$ is a singular point of $E$ only if 
 $p=q$ or $T_pC_1\subset\Pi_2$ or $T_qC_2\subset\Pi_1$.
 There are five possibilities for $p,q$ and the tangents, up to interchanging the conics $C_1$ and $C_2$.
\renewcommand{\theenumi}{\roman{enumi}}%
\begin{enumerate}
 \item $p=q$ and the tangent to each conic at $p$ does not lie in the plane of the other. 
 \item $p=q$ with $T_p C_1\subset\Pi_2$, but $T_q C_2\not\subset\Pi_1$.
 \item $p=q$ with both $T_p C_1\subset\Pi_2$ and $T_q C_2\subset\Pi_1$.
 \item $p\neq q$ and $T_p C_1\subset\Pi_2$, but $T_q C_2\not\subset\Pi_1$.
        Then $p\in T_q C_2$ is a stationary bisecant.
 \item $p\neq q$ and $T_p C_1=T_q C_2$ is $\Pi_1\cap\Pi_2$, and is a stationary bisecant.
\end{enumerate}
\end{lemma}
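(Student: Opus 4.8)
The plan is to combine the determinantal equation~\eqref{Eq:edgeCurve} for $E$ with a local analysis of the projection $\pi_1\colon E\to C_1$, which has generic degree two by Lemma~\ref{L:stBis}. First I would dispose of reducedness. By Lemma~\ref{L:stBis}, a general point $p\in C_1$ --- one for which $T_pC_1$ neither equals $\ell$ nor meets $C_2$ --- lies on two distinct stationary bisecants, hence its preimages under $\pi_1$ contain two distinct points of the reduced curve $E_{\mathrm{red}}$. Thus the bidegree $(a,b)$ of $E_{\mathrm{red}}$ has $b\ge 2$, and symmetrically $a\ge 2$. Since $E_{\mathrm{red}}\subseteq E$ and $E$ has bidegree $(2,2)$ by Theorem~\ref{Th:edgeCurve}, we get $(a,b)=(2,2)$, so $E=E_{\mathrm{red}}$ is reduced.

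The heart of the proof is to show that if $(p,q)\in E$ with $p\ne q$, $T_pC_1\not\subset\Pi_2$, and $T_qC_2\not\subset\Pi_1$, then $E$ is smooth at $(p,q)$; the contrapositive then pins down the singular points. Since $T_pC_1\not\subset\Pi_2$, this line meets $\Pi_2$ in the single point $x:=T_pC_1\cap\ell$. If $x\notin C_2$, then $x$ lies on two distinct tangent lines of $C_2$, so $\pi_1^{-1}(p)$ consists of two distinct reduced points, $\pi_1$ is unramified at $(p,q)$, and $E$ is smooth there. If $x\in C_2$, then $q=x$ is the only point of $\pi_1^{-1}(p)$; here I would first note that $q\notin C_1$ (else the symmetric argument for $\pi_2$ over $q$, using $T_qC_2\not\subset\Pi_1$, forces $p=q$), and then compute the local equation of $E$ at $(p,q)$ in affine coordinates on $\Pi_2$ with $q$ at the origin: the hypothesis $T_qC_2\ne\ell$ makes $\ell$ transverse to $C_2$ at $q$, the hypotheses $T_pC_1\ne\ell$ and $q\notin C_1$ make $p'\mapsto T_{p'}C_1\cap\ell$ a local isomorphism near $p$, and the upshot is that in suitable local coordinates $E$ is $\{z^2=(\text{unit})\cdot w\}$ --- a simple branch point --- so $E$ is again smooth. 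One can instead argue from~\eqref{Eq:edgeCurve} directly: differentiate the determinant by the Leibniz rule, simplify using the Euler relations $s\,\partial_s^2 f_i+t\,\partial_s\partial_t f_i=\partial_s f_i$ and their variants, and pair the eight resulting $4\times 4$ determinants with the linear functional cutting out the hyperplane $\langle T_pC_1,T_qC_2\rangle\subset\CC^4$; vanishing of the whole gradient then forces $T_pC_1\cap T_qC_2$ onto $C_1\cap C_2$, giving $p=q$, or forces one tangent into the plane of the other conic. Either way, a singular point of $E$ satisfies $p=q$ or $T_pC_1\subset\Pi_2$ or $T_qC_2\subset\Pi_1$.

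For the enumeration, note that $T_pC_1\subset\Pi_2$ is equivalent to $T_pC_1=\ell$ --- so $p$ is the point where $\ell$ touches $C_1$ --- and symmetrically for the third condition, while $p=q$ forces $p\in\ell$. Running over the seven nonempty subsets of the three conditions and identifying those interchanged by swapping $C_1$ and $C_2$ leaves exactly the five configurations (i)--(v). The supplementary claims follow quickly. In (v), $\ell=T_pC_1=T_qC_2$ contains both $p$ and $q$, so $\ell=\overline{p,q}$ is a (degenerate) stationary bisecant. In (iv), $\{p\}\times C_2$ is a component of $E$, so a singular point on it lies on a second component, which is a closure of honest stationary bisecants $(p',q')$ with $p'\to p$; because $\ell$ is tangent to $C_1$ at $p$, the point $T_{p'}C_1\cap\ell$ tends to $p$ itself, forcing $q$ to be a point of tangency of $C_2$ from $p$. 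Hence $p\in T_qC_2$ and $\overline{p,q}=T_qC_2$ is a stationary bisecant.

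The main obstacle will be the smoothness assertion of the second paragraph, and within it the claim that at a point where $T_pC_1$ meets $C_2$ the projection $\pi_1$ ramifies only simply; this is precisely the step that rules out a node of $E$, and it seems to require either the explicit local computation above or the careful bookkeeping with the hyperplane $\langle T_pC_1,T_qC_2\rangle$ in the determinantal approach. A secondary subtlety is to check that the configurations with $p\ne q$ but $p\in C_1\cap C_2$ or $q\in C_1\cap C_2$ do not actually occur on $E$ --- which again follows from the degree-two fibre structure of $\pi_1$ and $\pi_2$ --- so that these cases need not appear in the list.
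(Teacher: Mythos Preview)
Your argument is correct and shares the paper's core idea: a $(2,2)$-curve is smooth at any point where one of the two projections $\pi_i$ has a two-point fiber, and the generic two-point fibers supplied by Lemma~\ref{L:stBis} give reducedness. The one divergence is your treatment of the case $x=T_pC_1\cap\ell\in C_2$: your local computation showing a simple branch point there is correct but heavier than needed, since the same two-point-fiber criterion applied to the \emph{other} projection $\pi_2$ already finishes the job --- the fiber over $q$ has two points unless $y=T_qC_2\cap\ell$ lies on $C_1$, and that forces $\overline{p,q}=T_pC_1=T_qC_2=\ell$, contradicting the hypotheses. In return, your proof is more explicit than the paper's at the step where the paper simply asserts ``we are in Case~$(i)$'' without justification, and your subset enumeration of the five configurations is tidier than the paper's ad~hoc listing.
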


\begin{proof}
 Let $(p,q)\in C_1\times C_2$ be a point on a curve $E$ of bidegree $(2,2)$.
 If the fiber of $E$ in one of the projections from $(p,q)$, say to $C_2$, has exactly two points, then $E$
 is smooth at $(p,q)$.
 Indeed, as $E$ is a $(2,2)$ curve,  $E\cap(C_1\times\{q\})$ is either $C_1\times\{q\}$ or one double or two simple 
 points, and if two, then $E$ is smooth at each point.

 Consequently, there are three possibilities for points of $E$ in the fibers of the projections to $C_1$ and
 $C_2$ containing a singular point $(p,q)$.
 Either
\begin{enumerate}
 \item $(p,q)$ is the only point of $E$ in both fibers, 
 \item $(p,q)$ is the only point in one fiber and the other fiber is a component of $E$, or
 \item both fibers are components of $E$.
\end{enumerate}
 In Case 2, $E$ has at least one component with either linear bidegree $(1,0)$ or $(0,1)$, and in Case 3, it has at  
 least one component with each linear bidegree.

 Now let $E$ be the edge curve, which is smooth at any point $(p,q)$ where there is another point in one of the two
 fibers of projections to $C_i$. 
 Lemma~\ref{L:stBis} implies that there are two points in $E$ over a general point of either conic, so every
 component of $E$ is smooth at a generic point and therefore $E$ is reduced.
 By Lemma~\ref{L:stBis} and the analysis above, a point $(p,q)\in E$ is singular if and only if both tangents meet
 the other conic for otherwise there is a second point in one of the fibers. 

 If $T_p C_1\subset\Pi_2$, then every line in $\Pi_2$ through $p$ is a stationary bisecant,
 so $E$ contains $\{p\}\times C_2$, which has bidegree $(1,0)$. 
 If $T_qC_2\subset\Pi_1$, then as before $E$ contains $C_1\times\{q\}$, which has bidegree $(0,1)$.
 If neither occurs, but $E$ is singular at $(p,q)$, then we are in Case $(i)$.
 When $p=q$ and we are not in Case $(i)$, then, up to interchanging the indices 1 and 2, we are in either Case
 $(ii)$ or $(iii)$. 
 When $p\neq q$, so that one circle is tangent to the plane of the other, then we are in either Case $(iv)$
 or $(v)$.
\end{proof}

\begin{proof}[of Theorem~\ref{Th:edgeCurves}]
 We need only to rule out that the edge curve $E$ has type $2(1,0)+2(0,1)$ or has a cusp.
 By Lemma~\ref{L:fivePossibilities}, $E$ has a component $\{p\}\times C_2$ of bidegree $(1,0)$ exactly
 when $C_1$ is tangent to the plane $\Pi_2$ at the point $p$.
 Since $\Pi_1\neq\Pi_2$, there is at most one such point of tangency on $C_1$, so  $E$ has at most one
 component of bidegree $(1,0)$ and the same is true  for a component of bidegree $(0,1)$.
 Thus the type $2(1,0)+2(0,1)$ cannot occur for an edge curve.

 We show that if $(p,q)\in E$ is a singular point in Case $(i)$ of Lemma~\ref{L:fivePossibilities}, 
 then $E$ has a node at $(p,q)$, ruling out a cusp and
 completing the proof.

 Suppose that $p=q$ and the tangents to each conic at
 $p$ do not lie in the plane of the other. 
 Choose coordinates $x,y,z,w$ for $\PP^3$ so that $\Pi_1$ is the plane $z=0$, $\Pi_2$ is the plane $x=0$, 
 $T_pC_1$ is the line $y=z=0$, $T_pC_2$ is $x=y=0$, and $p=[0:0:0:1]$.
 Then we may choose parametrizations near $p$ for $C_1$ and $C_2$ of the form
 \begin{equation}\label{Eq:suParametrization}
  \begin{array}{rcl}
    C_1\ \colon\ s& \longmapsto& [\,s+as^2\,:\, bs^2\,:\,0\,:\, 1+cs+ds^2\,]\\\rule{0pt}{12pt}
    C_2\ \colon\ u& \longmapsto& [\,0\,:\, \beta u^2\,:\,u + \alpha u^2\,:\, 1+\gamma u+\delta u^2\,]\,,
  \end{array}
 \end{equation}
for some $a,b,c,d,\alpha,\beta,\gamma,\delta\in\CC$ where $b\beta\neq 0$.
The edge curve is defined by 
 \begin{multline}
   \det \begin{pmatrix}
     s+as^2 & bs^2 & 0 &  1+cs+ds^2\\
     1+2as\;  & 2bs  & 0 &  c+2ds\\
     0 & \beta u^2 & u + \alpha u^2 &  \;1+\gamma u+\delta u^2\;\\
     0 & \;2\beta u\;  & \;1 + 2\alpha u\;  &  \gamma + 2\delta u \end{pmatrix}
  \\\rule{0pt}{12pt}
  \ =\ 
    \left(\beta(ac-d)-b(\alpha\gamma-\delta)\right)s^2u^2  -2b\alpha s^2u + 2a\beta su^2
    +\beta u^2-bs^2\,.\label{Eq:newEdgeCurve}
 \end{multline}
Indeed, the matrix has rows $f_1(s), f'_1(s), f_2(u), f'_2(u)$, where $f_i$ is the parameterization of
$C_i$~\eqref{Eq:suParametrization}. 
The determinant vanishes when the tangent to $C_1$ at $f_1(s)$ meets the tangent to $C_2$ at $f_2(u)$.
The terms of lowest order 
in~\eqref{Eq:newEdgeCurve}, $\beta u^2 - b s^2$, have distinct linear factors
when $b\beta\neq 0$. 
Thus $E$ has a node when $s=u=0$, which is $(p,p)$.
\end{proof}

\section{Convex hull of two circles in $\RR^3$}\label{S:circles}

We classify the relative positions of two circles in $\RR^3$
and show that the combinatorial type of the face lattice of their convex hull depends only upon their relative
position. 
This relative position is determined by the combinatorial type of the face lattice and the real geometry of the edge
curve. 
We use this classification to determine when the convex hull of two circles is a spectrahedron.

Let $C_1,C_2$ be circles in $\RR^3$ lying in distinct planes $\Pi_1$ and $\Pi_2$, respectively. 
The intersection $C_1\cap \Pi_2$ in $\CC\PP^3$ is either two real points,
two complex conjugate points, or $C_1$ is tangent to $\Pi_2$ at a single real point.
Let $\defcolor{m_1}$ be the number of real points in this intersection, and the same for \defcolor{$m_2$}.
Order the circles so that $m_1\geq m_2$, and call  $[m_1,m_2]$ the 
\demph{intersection type} of the pair of circles.

The configuration of the circles is determined by the order of their points along the line 
$\defcolor{\ell}:=\Pi_1\cap\Pi_2\subset\RR^3$.
For example, $C_1$ and $C_2$ have \demph{order type $(1,2,1,2)$ along $\ell$} when they have  
intersection type $[2,2]$ and meet $\ell$ in distinct points which alternate.
If $C_1\cap C_2\neq \emptyset$, then we write \demph{$S$} for that shared point. 
For example, if $C_1$ meets $\ell$ in two real points with $C_2$ tangent to $\ell$ at one,
then this pair has order type \defcolor{$(1,S)$}.
The intersection type may be recovered from the order type.

A further distinction is necessary for intersection type $[0,0]$, when both circles meet $\ell$ in two complex
conjugate points.
In $\CC\PP^3$ either $C_1\cap C_2=\emptyset$ or 
$C_1\cap\ell=C_2\cap\ell$.
Write \defcolor{$\emptyset$} for the order type in the first case and \defcolor{$(2c)$} in the second. 
By Lemma~\ref{L:fivePossibilities}, the edge curve is smooth in order type $\emptyset$ and singular in
order type $(2c)$.

\begin{lemma}
 There are fifteen possible order types of two circles in $\RR^3$.
\end{lemma}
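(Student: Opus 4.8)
The plan is to carry out the classification in two stages. First, stratify pairs of circles by their intersection type $[m_1,m_2]$, of which there are exactly six, namely $[2,2]$, $[2,1]$, $[2,0]$, $[1,1]$, $[1,0]$ and $[0,0]$, since each $m_i\in\{0,1,2\}$ and we have arranged $m_1\ge m_2$. Second, within each intersection type, list the possible arrangements of the intersection points of $C_1$ and $C_2$ with $\ell$, recording which of these points are shared by the two circles, modulo the only symmetries that preserve the notion of order type: reversing the orientation of the affine line $\ell\cong\RR$, and interchanging $C_1$ with $C_2$ (which is permitted exactly when $m_1=m_2$). For $[0,0]$ one records in addition whether the two conjugate pairs on $\ell$ coincide.

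The structural fact I would establish first, to control the coincidences, is that $C_1\cap C_2\subseteq\Pi_1\cap\Pi_2=\ell$, so that $C_1\cap C_2=(C_1\cap\ell)\cap(C_2\cap\ell)$ computed as subschemes of $\ell\cong\PP^1$. Each $C_i\cap\ell$ is a length-two subscheme of $\PP^1$, and two such subschemes either coincide, meet in a single reduced point, or are disjoint. Hence the circles share zero points, one reduced point $S$, or (when $C_1\cap\ell=C_2\cap\ell$) a length-two scheme; in that last case $C_1\cap C_2$ has length two, and if that common scheme is a double point then each circle is tangent to $\ell$, so the two circles are mutually tangent, at that point.

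Then I would enumerate type by type. In $[2,2]$, writing $[a_i,b_i]$ for the chord $C_i\cap\ell$: disjoint circles give the three arrangements $(1,1,2,2)$, $(1,2,1,2)$, $(1,2,2,1)$; a single shared point $S$ gives two more, according to whether $S$ is an end or the middle one of the three distinct points; and two shared points give one --- six in total. In $[2,1]$, with $C_2$ tangent to $\ell$ at a point $T$: $T$ lies outside the chord of $C_1$, inside it, or is a point of $C_1$ (the order type $(1,S)$) --- three. In $[1,1]$: the two tangency points of the circles on $\ell$ are distinct (circles disjoint) or equal (circles mutually tangent) --- two. In $[2,0]$ and $[1,0]$ the circles are necessarily disjoint, and the only real datum is the real intersection of one circle with $\ell$, while the conjugate pair of the other varies through a connected family and produces no further discrete invariant --- one each. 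In $[0,0]$ the two conjugate pairs either coincide, giving the order type $(2c)$ (where $C_1\cap C_2$ has length two and, by Lemma~\ref{L:fivePossibilities}, the edge curve is singular), or are distinct, giving the order type $\emptyset$ --- two. Summing, $6+3+1+2+1+2=15$.

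It remains to show each of the fifteen patterns is realized by some pair of circles; this is routine and I would handle it either by pointing to the explicit configurations displayed in Tables~\ref{T:GalleryI}, \ref{T:GalleryII} and~\ref{T:GalleryIII}, or by direct constructions --- for instance, taking $\Pi_1$, $\Pi_2$ to be two coordinate planes and choosing circles in them that cross their common line $\ell$ in prescribed point pairs realizes all patterns with only real intersections, tangencies are produced by translating a circle until it touches $\ell$, and the purely complex types arise by choosing $\ell$ to miss one or both circles in the real picture. The step I expect to demand the most care is not any of the constructions but the bookkeeping of the symmetry reductions: one must check that reversing $\ell$ and swapping the circles identify exactly the configurations they should, so that (for example) $(S,1,2)$ and $(1,S,2)$ in type $[2,2]$ are kept distinct while $(1,1,2,2)$ is not double counted, and that the invisible complex data in types $[2,0]$, $[1,0]$ and $[0,0]$ is weighted exactly once.
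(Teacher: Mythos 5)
Your proposal is correct and follows essentially the same route as the paper: stratify by the six intersection types $[m_1,m_2]$ and enumerate order types within each, obtaining $2+1+1+2+3+6=15$, with existence delegated to the displayed configurations. The extra scaffolding you add (the observation that $C_1\cap C_2\subset\ell$ via the length-two subschemes of $\ell$, and the explicit symmetry bookkeeping) is sound but not a different method, merely a more detailed write-up of the same case analysis.
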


\begin{proof}
See Tables~\ref{T:GalleryI},~\ref{T:GalleryII}, and~\ref{T:GalleryIII} for the order types of circles and
their convex hulls. 
	
The possible intersection types are $[0,0]$, $[1,0]$, $[1,1]$, $[2,0]$, $[2,1]$, and $[2,2]$.
For $[0,0]$, we noted two order types, and intersection types $[1,0]$ and $[2,0]$ each admit one order type, namely
$(1)$ and $(1,1)$, respectively. 
	
For $[1,1]$, each circle $C_i$ is tangent to $\ell$ at a point $p_i$. 
Either $p_1 \neq p_2$ or $p_1= p_2$, so there are two order types, $(1,2)$ and $(S)$.
	
For $[2,1]$, the line $\ell$ is secant to circle $C_1$ and $C_2$ is tangent to $\ell$ at a point $p_2$.
Either $p_2$ is in the exterior of $C_1$ or it lies on $C_1$ or it is interior to $C_1$.
These give three order types $\ell$, $(1,1,2)$, $(1,S)$, and $(1,2,1)$, respectively.
	
Finally, for $[2,2]$ there are three order types when all four point are distinct, 
$(1,1,2,2)$, $(1,2,1,2)$, and $(1,2,2,1)$.
When one point is shared, we have $(1,2,S)$ or $(1,S,2)$.
Finally, both points may be shared, giving $(S,S)$.  
\end{proof}

\begin{table}[htb]
  %
  %
  \begin{tabular}{|c|c|c|}\hline
   \includegraphics[width=100pt]{pictures/bothExtreme}&
   \includegraphics[width=95pt]{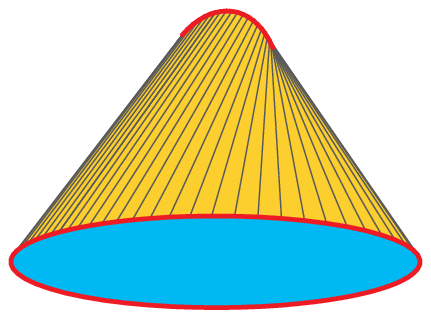}&
   \includegraphics[width=90pt]{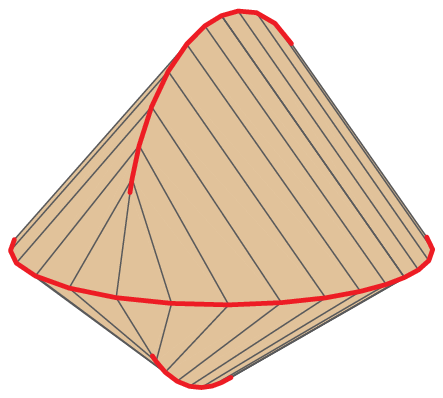}\rule{0pt}{84pt}\\
   $[0,0]$ \quad $\emptyset$     & $[2,0]$ \quad $(1,1)$  &   $[2,2]$ \quad $(1,2,2,1)$ \\
   \includegraphics[width=65pt]{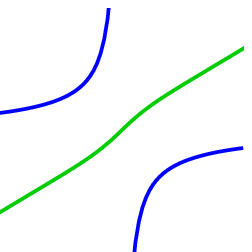}&
   \includegraphics[width=65pt]{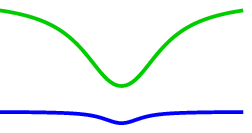}&
   \includegraphics[width=65pt]{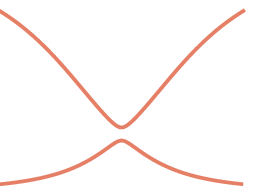}\\\hline
   \includegraphics[width=125pt]{pictures/unLinked}&
   \includegraphics[width=95pt]{pictures/Oloid}&\rule{0pt}{70pt}
   \includegraphics[width=100pt]{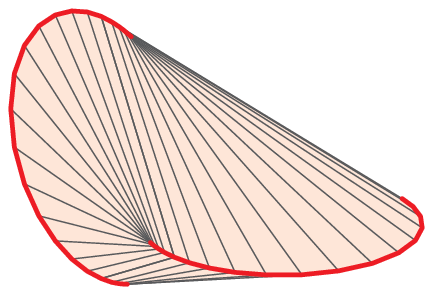}\\
  $[2,2]$ \quad $(1,1,2,2)$ & $[2,2]$ \quad $(1,2,1,2)$ & $[2,2]$ \quad $(1,S,2)$  \\
   \includegraphics[width=65pt]{pictures/unLinked_wide_curve}&
   \includegraphics[width=65pt]{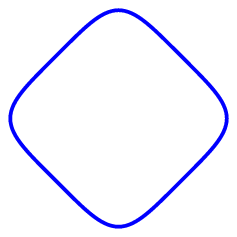}&
   \includegraphics[width=65pt]{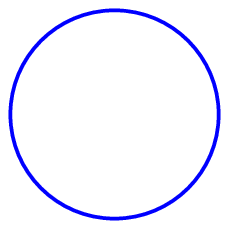}
\\\hline
  \end{tabular}
 \caption{Some convex hulls, intersection and order types, and edge curves.}
  \label{T:GalleryI}
\end{table}
\begin{table}[htb]

  \begin{tabular}{|c|c|c|}\hline
   \includegraphics[width=80pt]{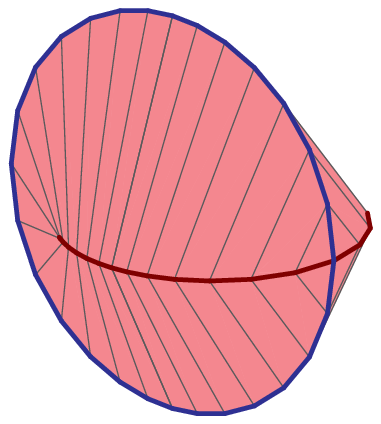}\rule{0pt}{93pt}&
   \includegraphics[width=90pt]{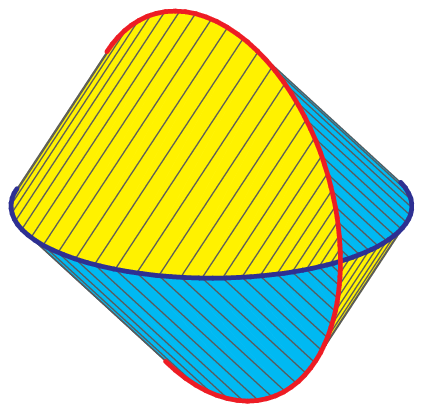}&
   \includegraphics[width=110pt]{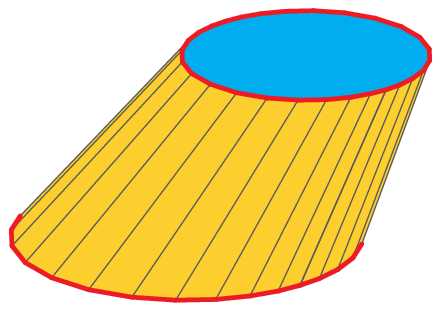}\\
   $[2,2]$ \quad $(1,2,S)$ & $[2,2]$ \quad $(S,S)$ & $[0,0]$ \quad $(2c)$ \\
   \includegraphics[width=95pt]{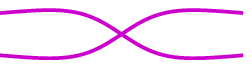}&
   \includegraphics[width=65pt]{pictures/orthoPlane_curve}&
   \includegraphics[width=65pt]{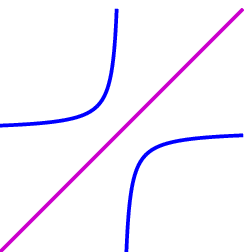}\\\hline
  \end{tabular}
 \caption{More convex hulls, intersection and order types, and edge curves.}
  \label{T:GalleryII}
\end{table}
\begin{table}[htb]
  \begin{tabular}{|c|c|c|}\hline
   \raisebox{10pt}{\includegraphics[width=102pt]{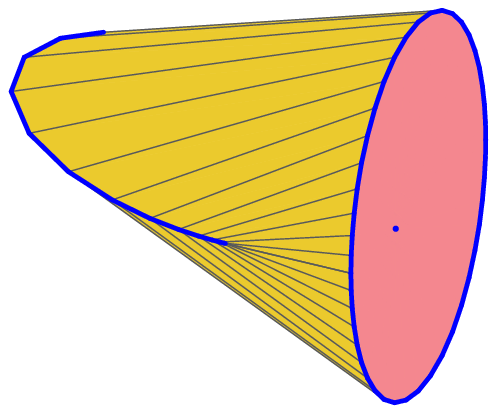}}&
   \includegraphics[width=80pt]{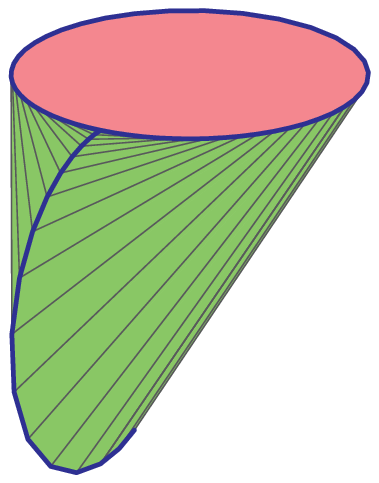}\rule{0pt}{105pt}&
   \includegraphics[width=95pt]{pictures/tangentToPlaneI}\\
    $[2,1]$ \quad  $(1,2,1)$ & $[2,1]$ \quad $(1,S)$ & $[2,1]$ \quad $(1,1,2)$ \\
   \includegraphics[width=65pt]{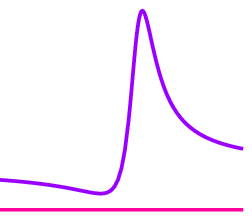}&
   \includegraphics[width=65pt]{pictures/tangentToPlaneIV_curve}&
   \includegraphics[width=70pt]{pictures/tangentToPlaneI_curve}\\\hline   
   \raisebox{10pt}{\includegraphics[width=120pt]{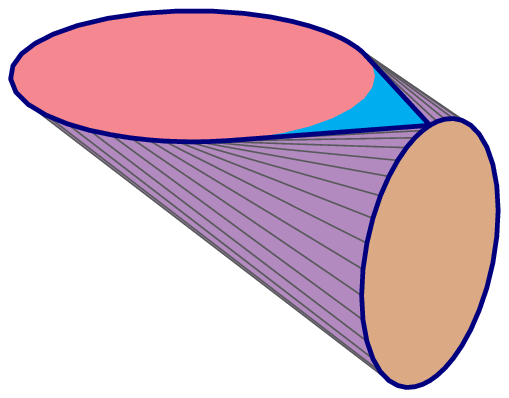}}&
   \includegraphics[width=80pt]{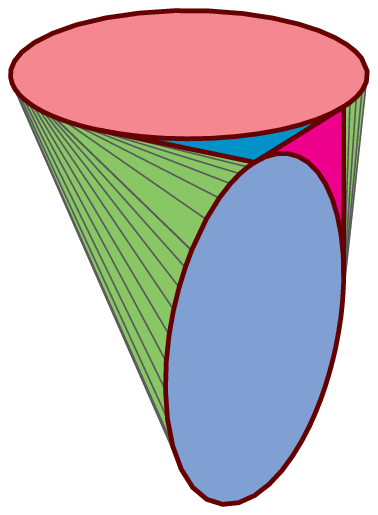}\rule{0pt}{113pt}&
   \raisebox{10pt}{\includegraphics[width=95pt]{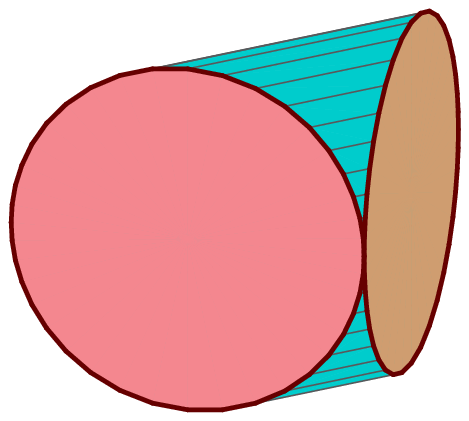}}\\
   $[1,0]$ \quad  $(1)$& $[1,1]$ \quad $(1,2)$ &$[1,1]$ \quad $(S)$\\
   \includegraphics[width=65pt]{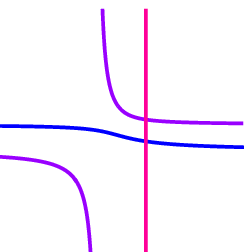}&
   \includegraphics[width=65pt]{pictures/tangentToPlaneIII_curve}&
   \includegraphics[width=65pt]{pictures/Hinged_curve}\\\hline
  \end{tabular}
  \caption{More convex hulls, intersection and order types, and edge curves.}
  \label{T:GalleryIII}
\end{table}


The order type of the circles determines the
combinatorial type of the face lattice of their convex hull \defcolor{$K$}.
Describing the face lattice means identifying all (families of) faces of $K$, their incidence relations, and which
are exposed/not exposed.
Throughout, \demph{$D_i$} is the disc of the circle $C_i$.
We invite the reader to peruse our gallery in Tables~\ref{T:GalleryI},~\ref{T:GalleryII}, and~\ref{T:GalleryIII}
while reading this classification. 
Our main result is the following.

\begin{theorem}\label{Th:FaceLattice}
 The order type of $C_1,C_2$ determines the combinatorial type of the face lattice of $K$, as summarized in
 Table~\ref{T:faceLattices}.
 There are eleven distinct combinatorial types of face lattice.
 The combinatorial type of the face lattice, together with the real algebraic geometry of its edge curve,
 determines the order type.
\end{theorem}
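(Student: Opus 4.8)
The plan is to work through the fifteen order types one at a time, and for each one read off the face lattice of $K=\conv(C_1\cup C_2)$ from the two pieces of data we already control: the real geometry of the edge curve $E\cap(\RR\PP^1\times\RR\PP^1)$, worked out case by case in Tables~\ref{T:GalleryI}--\ref{T:GalleryIII}, and the position of the discs $D_1,D_2$ relative to $\ell=\Pi_1\cap\Pi_2$. First I would record the general principles that hold in every case: the zero-dimensional faces of $K$ are the extreme points of $C_1\cup C_2$; the one-dimensional faces are the stationary bisecants $\overline{p,q}$ with $(p,q)$ a real point of the edge curve $E$ that actually lies on $\partial K$ (this is where one connected component of the real edge curve contributes and the other, as in Example~\ref{Ex:edge_curves2}, does not); and the two-dimensional faces are either a disc $D_i$ (exactly when $C_i$ is an extreme set, equivalently when $C_{3-i}$ lies weakly on one side of $\Pi_i$), or the planar face arising when one circle is tangent to the plane of the other (the $\conv$ of a circle with an external point, as on the right of Fig.~\ref{Fig:firstTaste}). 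Whether a given face is exposed is decided by whether its unique supporting hyperplane meets $K$ in exactly that face; the non-exposed ones are precisely the endpoints of extreme arcs and the tangent stationary bisecants lying inside a 2-face, exactly as in the discussion following Fig.~\ref{F:CirclesInPlane} and as flagged by the singular points of $E$ in Lemma~\ref{L:fivePossibilities}.

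Next I would organize the fifteen order types by intersection type and carry out the determination. For each type one checks: (a) which arcs of $C_1$ and $C_2$ are extreme, by intersecting the relevant connected component of the real edge curve with the conics and using that an extreme point of $C_i$ not on a bitangent arc is a ruling endpoint of $\cE$; (b) which of $D_1,D_2$ (or which planar tangent-face) occurs; (c) the incidences, i.e.\ which bisecants bound which 2-faces and meet which vertices; and (d) exposedness. Since the edge curve, the discs, and $\ell$ are all determined by the order type, steps (a)--(d) have determined answers, and tabulating them yields Table~\ref{T:faceLattices}. A direct comparison of the fifteen resulting lattices shows that exactly four coincidences occur among them, leaving eleven distinct combinatorial types; I would simply point to the table for this count.

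For the converse — that the combinatorial type together with the real geometry of $E$ recovers the order type — I would argue by inspecting which order types collapse to the same lattice and checking that in each such collision the real edge curves are distinguishable. The collisions are between pairs where $K$ looks combinatorially identical but, say, $E$ is smooth in one and has a node or an extra linear component in the other (the $\emptyset$ versus $(2c)$ distinction of intersection type $[0,0]$ is the cleanest instance, and was already singled out via Lemma~\ref{L:fivePossibilities}); in the remaining collisions the number of connected components of the real edge curve, or the presence of a real isolated point, separates the cases. Assembling these observations shows the map (order type) $\to$ (face lattice, real edge curve) is injective on the fifteen types.

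The main obstacle I anticipate is step (d), exposedness, in the degenerate configurations: when a circle is tangent to the plane of the other, the pencil of degenerate stationary bisecants through the point of tangency (Lemma~\ref{L:stBis} and the following Remark) sits inside a 2-dimensional face, and one must argue carefully which of these bisecants, and which endpoints of extreme arcs, fail to be exposed — the supporting-hyperplane analysis there is more delicate than the bookkeeping of vertices, edges, and 2-faces, and it is what produces the non-exposed faces recorded in the lattice. The rest is a finite, if lengthy, verification driven entirely by the tables.
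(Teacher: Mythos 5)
Your proposal is correct in outline and takes essentially the same route as the paper: a case-by-case verification over the fifteen order types, driven by general facts about which points of $C_1\cup C_2$ are extreme, which discs or planar sections are 2-faces, which stationary bisecants give edges and which faces are exposed (the paper packages these as Lemma~\ref{L:section}, Corollary~\ref{C:2face} and the pencil-of-planes argument of Lemma~\ref{L:bisecantFaces}, which is exactly the delicate supporting-hyperplane analysis you flag), followed by counting the eleven distinct lattices and separating the coinciding ones ($\emptyset$ vs.\ $(2c)$, $(1,1)$ vs.\ $(1,2,1)$, and $(1,1,2,2)$ vs.\ $(1,2,1,2)$ vs.\ $(1,S,2)$) by smoothness, singularities and the number of real components of the edge curve.
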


We determine the face lattice for each order type.
Some general statements are given in preliminary results which precede our proof of
Theorem~\ref{Th:FaceLattice}. 
The statements are asymmetric, with the symmetric statement obtained by interchanging 1 and 2.
We first study the section $\defcolor{\kappa_1}:=K\cap\Pi_1$ of $K$,  which contains $D_1$.

\begin{lemma}\label{L:section}
 $\kappa_1=\conv(C_1,C_2\cap\Pi_1)$.
\end{lemma}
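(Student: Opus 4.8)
The plan is to prove the two inclusions $\kappa_1 \subseteq \conv(C_1, C_2\cap\Pi_1)$ and $\conv(C_1,C_2\cap\Pi_1)\subseteq\kappa_1$ separately. The reverse inclusion is the easy one: $C_1\subseteq K$ since $C_1\subseteq K$ and $C_1\subseteq\Pi_1$, while $C_2\cap\Pi_1\subseteq K\cap\Pi_1=\kappa_1$ trivially; since $\kappa_1$ is convex (an intersection of the convex set $K$ with the affine plane $\Pi_1$) it contains the convex hull of these points, giving $\conv(C_1,C_2\cap\Pi_1)\subseteq\kappa_1$.

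The substance is the forward inclusion. A point $x\in\kappa_1$ lies in $K=\conv(C_1\cup C_2)$, so by Carath\'eodory's theorem $x=\sum_i\lambda_i p_i$ with each $p_i\in C_1\cup C_2$, $\lambda_i\geq 0$, $\sum\lambda_i=1$. Split the sum into the part lying on $C_1$ and the part lying on $C_2$: write $x = \mu a + (1-\mu) b$ where $a\in\conv(C_1)=D_1\subseteq\Pi_1$, $b\in\conv(C_2)=D_2\subseteq\Pi_2$, and $\mu\in[0,1]$ (absorbing the coefficients; if all points are on one circle the claim is immediate). If $\mu\in\{0,1\}$ we are done, so assume $\mu\in(0,1)$. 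Then $x,a\in\Pi_1$, and since $x=\mu a+(1-\mu)b$ with $\mu\neq 1$, the point $b$ lies on the line through $a$ and $x$, hence $b\in\Pi_1$. Thus $b\in D_2\cap\Pi_1$. Now $D_2\cap\Pi_1 = \conv(C_2)\cap\Pi_1$, and since $\Pi_1\neq\Pi_2$ the intersection $D_2\cap\Pi_1$ is a chord (or a point, or empty) of the disc $D_2$ cut out by the line $\ell=\Pi_1\cap\Pi_2$; its endpoints lie on $C_2\cap\Pi_1$, so $D_2\cap\Pi_1 = \conv(C_2\cap\Pi_1)$. Therefore $b\in\conv(C_2\cap\Pi_1)$ and $a\in D_1=\conv(C_1)$, whence $x=\mu a+(1-\mu)b\in\conv(C_1\cup(C_2\cap\Pi_1))=\conv(C_1,C_2\cap\Pi_1)$.

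I would then remark that the only slightly delicate point is the identity $\conv(C_2)\cap\Pi_1=\conv(C_2\cap\Pi_1)$: one must note that $\Pi_1$ meets the plane $\Pi_2$ of $C_2$ in the line $\ell$ (since $\Pi_1\neq\Pi_2$), so $\conv(C_2)\cap\Pi_1 = D_2\cap\ell$, which is a line segment inside $D_2$ whose endpoints are exactly the real points of $C_2\cap\ell = C_2\cap\Pi_1$; in the degenerate cases where $C_2$ is tangent to $\ell$ or misses it in $\RR^3$, the segment degenerates to a point or the empty set and the identity still holds (with the convention $\conv(\emptyset)=\emptyset$). Aside from this the argument is just a bookkeeping of convex combinations together with the observation that a nontrivial convex combination of two points forces collinearity. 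The main obstacle, such as it is, is simply making sure the degenerate intersection types $[*,0]$ and $[*,1]$ are covered by the same reasoning, which they are since the argument never used that $C_2\cap\Pi_1$ has two real points.
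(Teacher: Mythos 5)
Your proof is correct and follows essentially the same route as the paper: write $x\in\kappa_1$ as a convex combination of a point of $D_1=\conv(C_1)$ and a point of $D_2=\conv(C_2)$, use collinearity to force the $D_2$-point into $\Pi_1$, and identify $D_2\cap\Pi_1=\conv(C_2\cap\Pi_1)$. You merely spell out the degenerate cases (and the trivial reverse inclusion) that the paper leaves implicit; the only blemish is the typo ``$C_1\subseteq K$'' where you mean $C_1\subseteq\kappa_1$.
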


\begin{proof}
 As $D_i=\conv(C_i)$, $K=\conv(D_1,D_2)$.
 Therefore a point $x\in K$
 is a convex combination $\lambda y+\mu z$ ($\lambda,\mu\geq 0$ with $\lambda+\mu=1$) of points $y\in D_1$ and 
 $z\in D_2$. 
 If  $x\in\kappa_1\subset\Pi_1$, then as 
 $y\in\Pi_1$, we must have that $z\in D_2\cap\Pi_1= \conv(C_2\cap\Pi_1)$.
\end{proof}

\begin{corollary}\label{C:2face}
 If $C_2\cap\Pi_1\subset D_1$, then we have $\kappa_1=D_1$.
 Otherwise, $\kappa_1$ is the convex hull of $D_1$ and the one or two points of $C_2\cap\Pi_1$
 exterior to $D_1$.
 A point $p\in C_1$ is an extreme point of $\kappa_1$ if and only if $C_1$ and $C_2\cap\Pi_1$ lie on the same side
 of $T_pC_1$. 
 An extreme point $p\in C_1$ of $\kappa_1$ is not exposed if and only if $T_pC_1$ meets $C_2\smallsetminus\{p\}$.
 Extreme points of $\kappa_1$ are extreme points of $K$ and nonexposed points of $\kappa_1$ are nonexposed in
 $K$.
 Finally, $\kappa_1$ is a face of $K$ if and only if $m_2\leq 1$.
\end{corollary}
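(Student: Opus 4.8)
Throughout, the plan is to use Lemma~\ref{L:section}, which identifies $\kappa_1$ with $\conv(C_1, C_2\cap\Pi_1)$, and then to reason purely inside the plane $\Pi_1$, where $C_1$ is a circle, $D_1$ its disc, and $C_2\cap\Pi_1$ consists of at most two points (or, in the tangent case, a single point). The first statement is immediate: if all points of $C_2\cap\Pi_1$ lie in $D_1$, then $\conv(C_1, C_2\cap\Pi_1)=\conv(C_1)=D_1$; otherwise $\kappa_1$ is the convex hull of $D_1$ together with whichever of the (at most two) points of $C_2\cap\Pi_1$ lie outside $D_1$, since interior points contribute nothing to a convex hull.

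Next I would treat the extreme points of $\kappa_1$ lying on $C_1$. A point $p\in C_1$ is extreme in $\kappa_1$ precisely when there is a line through $p$ having all of $\kappa_1$ on one side, i.e.\ a supporting line at $p$. The only candidate supporting line of $D_1$ at $p$ is the tangent $T_pC_1$; hence $p$ is extreme in $\kappa_1$ iff $T_pC_1$ supports $\kappa_1$, which by Lemma~\ref{L:section} means $C_1$ and $C_2\cap\Pi_1$ lie on the same side of $T_pC_1$ (the circle $C_1$ is always weakly on one side of its tangent). For exposedness: $T_pC_1$ exposes $p$ in $\kappa_1$ iff $\kappa_1\cap T_pC_1=\{p\}$; since $\kappa_1\cap T_pC_1\supseteq\{p\}$ always and can be larger only if $T_pC_1$ contains another point of $C_2\cap\Pi_1$ (it meets $C_1$ only at $p$), the point $p$ fails to be exposed iff $T_pC_1$ meets $(C_2\cap\Pi_1)\smallsetminus\{p\}$, and I would then note this is equivalent to $T_pC_1$ meeting $C_2\smallsetminus\{p\}$: if $T_pC_1$ meets $C_2$ at a point $q\neq p$ then $q\in\Pi_1$ automatically, and conversely.

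For the passage from $\kappa_1$ to $K$, I would argue that a point $p$ extreme in the planar section $\kappa_1=K\cap\Pi_1$ is extreme in $K$: a supporting line of $\kappa_1$ at $p$ inside $\Pi_1$, together with the supporting hyperplane $\Pi_1$ of $D_1$-side versus $D_2$-side — more precisely, since $T_pC_1\subset\Pi_1$ and $D_2$ lies weakly on one side of $\Pi_1$ only when... — here one wants the cleaner statement that an extreme point of a planar section of a convex body that is also extreme in that section, with the section cut out by a supporting argument, remains extreme; the standard fact is that if $F=K\cap\Pi$ is a face of $K$ then extreme (resp.\ nonexposed) points of $F$ are extreme (resp.\ nonexposed) in $K$. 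So the last clause — $\kappa_1$ is a face of $K$ iff $m_2\le 1$ — must be proved first or in tandem, and I would do it as follows: if $m_2=2$, then $C_2$ has a point strictly on each side of $\Pi_1$, so $\Pi_1$ is not a supporting hyperplane and $\kappa_1=K\cap\Pi_1$ is not a face; if $m_2\le 1$, then $C_2$ (hence $D_2$) lies weakly on one side of $\Pi_1$, so $\Pi_1$ supports $K$ and $\kappa_1=K\cap\Pi_1$ is the exposed face it cuts out. Once $\kappa_1$ is a face, extreme/nonexposed points of $\kappa_1$ are extreme/nonexposed in $K$ by the general face fact; and when $m_2=2$ one still gets the extreme-point and nonexposed-point claims for $p\in C_1$ by a direct local argument (the same tangent line $T_pC_1$, now viewed in $\RR^3$, together with a small tilt separating $D_2$), which I expect to be the main technical nuisance. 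The hard part will be making this last local separation argument airtight in the case $m_2=2$, where $\kappa_1$ is genuinely not a face of $K$: one must exhibit an actual supporting hyperplane of $K$ in $\RR^3$ through $p$, obtained by perturbing $\Pi_1$ slightly away from the two points of $C_2\cap\Pi_1$ that are exterior to $D_1$, and check it still supports all of $D_2$.
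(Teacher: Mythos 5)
Your planar analysis of $\kappa_1$ (the identification of $\kappa_1$, the extreme-point criterion via the unique candidate supporting line $T_pC_1$, and the exposedness criterion) matches the paper's reasoning, as does your argument for the final statement that $\kappa_1$ is a face of $K$ exactly when $m_2\le 1$. The gap is in the two transfer statements, that extreme (resp.\ nonexposed) points of $\kappa_1$ are extreme (resp.\ nonexposed) in $K$, in the case $m_2=2$. You reduce these to the standard fact about faces, correctly note that $\kappa_1$ is not a face when $m_2=2$, and then propose to recover the claims by exhibiting a supporting plane of $K$ through $p$ obtained by perturbing $\Pi_1$ ``slightly away from the two points of $C_2\cap\Pi_1$.'' This cannot work: when $m_2=2$ the circle $C_2$ crosses $\Pi_1$ at two points, so $D_2$ has points strictly on both sides of every plane sufficiently close to $\Pi_1$, and no such perturbed plane supports $K$. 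Moreover, even a correctly chosen supporting plane at $p$ (one containing $T_pC_1$, as in the proof of Lemma~\ref{L:bisecantFaces}) only shows $p\in\partial K$; it does not show extremality of the nonexposed extreme points, precisely those where $T_pC_1$ meets $C_2$, since every supporting plane at such a $p$ contains a whole bisecant segment of $K$ and one would still have to analyze that face.

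What is needed, and what the paper means by ``follows by Lemma~\ref{L:section},'' is the convex-combination argument behind that lemma, which works uniformly in $m_2$: if $p\in\Pi_1$ is a proper convex combination of points of $K=\conv(D_1\cup D_2)$, then aggregating the $D_1$- and $D_2$-contributions gives $p=(1-\tau)a+\tau b$ with $a\in D_1$, $b\in D_2$, and since $p,a\in\Pi_1$ either $\tau=0$ or $b\in D_2\cap\Pi_1\subset\kappa_1$; in each case $p$ is exhibited as a proper convex combination of points of $\kappa_1$ (the only delicate subcase, $p$ interior to a chord of $D_2$ crossing $\Pi_1$, places $p$ in the relative interior of the segment $D_2\cap\Pi_1\subset\kappa_1$), contradicting extremality in $\kappa_1$. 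For the nonexposedness transfer no face property is needed at all: if a plane $H$ exposed $p$ in $K$, then $H\ne\Pi_1$ (else $\kappa_1=\{p\}$), and the line $H\cap\Pi_1$ would expose $p$ in $\kappa_1$; the contrapositive gives the claim for every order type. With these two replacements your argument closes; as written, the $m_2=2$ case is not proved.
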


\begin{proof}
 The first two statements follow from Lemma~\ref{L:section}.
 The next two about extreme points $p$ of $\kappa_1$ follow as $T_pC_1$ is the only possible supporting line to
 $\kappa_1$ at $p$.
 The next, about extreme points of $K$ and its section $\kappa_1$, follows by Lemma~\ref{L:section}, and the last is
 immediate as $C_2$ lies on one side of $\Pi_1$ if and only if $|C_2\cap\Pi_1|<2$.
\end{proof}

By Lemma~\ref{L:stBis}, a general point $p\in C_1$ lies on two stationary bisecants.
If $p\in K$ is extreme, then these may support one-dimensional \demph{bisecant faces} of $K$.
We determine the bisecant faces meeting most extreme points.
Any plane supporting an extreme point $p\in C_1$ contains $T_p C_1$.
If such a plane does not meet $C_2$, then $p$ is exposed.

\begin{lemma}\label{L:bisecantFaces}
 Let $p\in K$ be an extreme point of $K$.
 If $T_p C_1$ neither meets $C_2$ nor lies in $\Pi_2$, then $p$ is exposed.
 Such a point $p$ lies on one bisecant face if $m_2\leq 1$ and two if $m_2=2$.
 When there are two, one is on each side of $\Pi_1$.
\end{lemma}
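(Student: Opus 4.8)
\emph{Set‑up.} The plan is to work inside the pencil of planes through the tangent line $T_pC_1$, since every supporting hyperplane of $K$ at $p$ lies in it: such a plane $\Pi$ supports the disc $D_1\subset\Pi_1$ at the smooth boundary point $p\in C_1$, so either $\Pi=\Pi_1$ or $\Pi\cap\Pi_1$ is the unique supporting line $T_pC_1$ of $D_1$ at $p$. I parametrize the pencil by the line $\Pi\cap\Pi_2$; each such line contains the fixed point $q:=T_pC_1\cap\Pi_2$, which lies on $\ell$ and is well defined because $T_pC_1\not\subset\Pi_2$. Two facts I would record first. One, $q\notin D_2$: indeed $q\notin C_2$ since $T_pC_1\cap C_2=\emptyset$, and if $q$ were interior to $D_2$ then every plane of the pencil, and $\Pi_1$, would cross $\operatorname{int}D_2\subset K$, so the extreme point $p$ would have no supporting plane at all, which is absurd. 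Two, $\kappa_1\cap T_pC_1=\{p\}$: by Lemma~\ref{L:section}, $\kappa_1=\conv(D_1\cup(C_2\cap\ell))$; the disc $D_1$ meets $T_pC_1$ only at $p$, and the points $C_2\cap\ell$ lie on the $D_1$‑side of $T_pC_1$ because $p$ is extreme (Corollary~\ref{C:2face}) and off $T_pC_1$ because $T_pC_1\cap C_2=\emptyset$, hence strictly to one side, so their convex hull with $D_1$ touches $T_pC_1$ only at $p$.

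\emph{Exposedness.} Since $q\notin D_2$, the lines through $q$ missing $D_2$ form a nonempty open arc bounded by the two tangent lines from $q$ to $C_2$, so the planes of the pencil disjoint from $D_2$ form a nonempty open arc $\mathcal A$, bounded by the two planes $e_1,e_2$ tangent to $C_2$ at points $t_1,t_2$. For $\Pi\ne\Pi_1$ in the pencil, $\Pi\cap\Pi_1=T_pC_1$, so $\kappa_1$ lies on one closed side $\Pi^{+}$ of $\Pi$ and meets $\Pi$ only at $p$; thus $\Pi$ supports $K=\conv(\kappa_1\cup D_2)$ precisely when $D_2\subset\Pi^{+}$ as well, and when in addition $\Pi\in\mathcal A$ a one‑line convexity argument forces $\Pi\cap K=\{p\}$ (a point of $\kappa_1\subset\Pi^{+}$ combined with one of $D_2\subset\operatorname{int}\Pi^{+}$ cannot land on $\Pi$ unless it is $p$). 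Likewise, if $e_i\ne\Pi_1$ supports $K$ then $e_i\cap K=[p,t_i]$, a stationary bisecant since $T_pC_1$ and $T_{t_i}C_2$ both lie in $e_i$. So it suffices to produce one plane of $\mathcal A$ with $D_2$ on its $\kappa_1$‑side. Suppose there is none. Then no plane of $\mathcal A$ supports $K$; no plane outside $\overline{\mathcal A}$ does (it cuts $D_2$); and neither does $e_1$ or $e_2$, for in the limit from $\mathcal A$ the disc $D_2$ sits on the closed side of $e_i$ opposite $\kappa_1$, so the two‑dimensional sets $\kappa_1,D_2$ protrude to opposite sides and $K$ straddles $e_i$. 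This leaves $\Pi_1$ as the only possible supporting plane at $p$, hence $m_2\le 1$; but a short computation shows that tilting $\Pi_1$ off $D_2$ lands in $\mathcal A$ with $D_2$ on the $\kappa_1$‑side, a contradiction (and when $m_2=0$ the contradiction is immediate, as $\Pi_1\in\mathcal A$ and the label ``$\kappa_1$‑side'' flips across $\Pi_1$ while the actual side of $D_2$ does not). Hence $p$ is exposed.

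\emph{Counting bisecant faces.} The supporting planes of $K$ at $p$ form a closed sub‑arc $\mathcal S\subset\overline{\mathcal A}$ (the angular sector cut out of the normal cone at $p$), with nonempty interior by the previous step; for $\Pi$ in its interior $\Pi\cap K=\{p\}$, so the faces properly containing $p$ arise from the endpoints of $\mathcal S$, which lie among $\Pi_1,e_1,e_2$ by the above, and $\Pi_1$ contributes only the two‑dimensional face $\kappa_1$, never a segment. (There are no hidden non‑exposed bisecant faces at $p$: such an edge would lie in a two‑dimensional face of $K$, and the only one through $p$ is $\kappa_1$, whose edges are the tangent segments from $C_2\cap\ell$ to $C_1$ — none of which contains $p$, since that would put $q'\in C_2$ on $T_pC_1$.) If $m_2=2$ then $\Pi_1$ is secant to $D_2$, so $\Pi_1\notin\overline{\mathcal A}$, the side of $D_2$ is constant along $\mathcal A$ (no flip, $\Pi_1$ not being interior to it), both $e_1,e_2$ support $K$, and $\mathcal S=\overline{\mathcal A}$: the two bisecant faces are $[p,t_1],[p,t_2]$. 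Here $\overline{t_1t_2}$ is the polar of $q$ with respect to $C_2$, and since $q\in\ell$ is exterior to the chord $D_2\cap\ell$ its harmonic conjugate with respect to $C_2\cap\ell$ is interior to that chord, so $\overline{t_1t_2}$ crosses $\ell$ and $t_1,t_2$ lie on opposite sides of $\Pi_1$; the two bisecant faces lie one on each side of $\Pi_1$. If $m_2\le 1$ then $\Pi_1$ supports $K$ and is one endpoint of $\mathcal S$ (contributing $\kappa_1$, not a segment); for $m_2=1$ the other endpoint is the remaining tangent plane, and for $m_2=0$ the label ``$\kappa_1$‑side'' changes sign as the plane sweeps through $\Pi_1\in\mathcal A$, so exactly one of $e_1,e_2$ supports $K$ and is the other endpoint. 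Either way exactly one endpoint yields a bisecant face $[p,t_i]$.

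\emph{Main obstacle.} The delicate point is the side‑bookkeeping of the last two paragraphs — keeping track of which side of the rotating plane the disc $D_2$ occupies, and in particular of the fact that the label ``the side containing $\kappa_1$'' flips exactly when the plane sweeps through $\Pi_1$ and nowhere else. This flip is what produces the jump from two bisecant faces ($m_2=2$) to one ($m_2\le 1$) and the ``opposite sides of $\Pi_1$'' statement, and it is the place where both hypotheses on $T_pC_1$ are genuinely used: they give $q\notin C_2$, hence two distinct tangent planes $e_1\ne e_2$ and a nonempty arc $\mathcal A$, and they make $q$ and the lines $\Pi\cap\Pi_2$ meaningful in the first place. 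Everything else is routine convexity.
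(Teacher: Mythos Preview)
Your argument is correct and follows the same approach as the paper: work in the pencil of planes through $T_pC_1$, identify the arc of supporting planes, and read off the bisecant faces from its endpoints. The paper compresses this into five sentences, phrasing it via an interval $I$ of planes ``meeting $K$'' and an interval $\gamma$ of planes meeting $C_2$, and splitting on whether $\Pi_1$ is an endpoint of $I$ ($m_2\le 1$) or an interior point ($m_2=2$); you use the complementary arcs $\mathcal A$ and $\mathcal S$ and supply considerably more detail, including the polar-line/harmonic-conjugate argument for ``one on each side of $\Pi_1$,'' which the paper simply reads off from $\Pi_1$ being interior to $I$. The only spot worth tightening is the $m_2=1$ branch of your exposedness contradiction (``a short computation shows''): the clean reason is that the tangency point $t_1=C_2\cap\ell$ lies in both $\kappa_1$ (by Lemma~\ref{L:section}) and $D_2$, so after tilting $\Pi_1$ into $\mathcal A$ the whole of $D_2$ must lie on the side of $\Pi'$ containing $t_1$, which is the $\kappa_1$-side.
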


\begin{proof}
 Let $p\in C_1$ be an extreme point of $K$ such that $T_p C_1$ neither meets $C_2$ nor lies in $\Pi_2$.
 By Corollary~\ref{C:2face}, $C_1$ and $C_2\cap\Pi_1$ lie on the same side of $T_pC_1$ in $\Pi_1$.
 In the pencil $\RR\PP^1$ of planes containing $T_pC_1$, those meeting $K$ form an interval $I$ containing 
 $\Pi_1$ and an interval $\gamma$ of planes meeting $C_2$.
 Each endpoint of $\gamma$ is a plane containing a stationary bisecant through $p$.
 Our assumptions on $p$ and $T_pC_1$ imply that $I\neq\RR\PP^1$, so that $p$ is exposed. 
 If $m_2\leq 1$, then $\Pi_1$ is one endpoint of $I$ and the other is an endpoint of $\gamma$, otherwise the
 endpoints of $I$ are the endpoints of $\gamma$ and $\Pi_1$ is an interior point, which proves the lemma.
\end{proof}

\begin{remark}\label{R:program}
 Corollary~\ref{C:2face} identifies the 2-faces, extreme points, and some nonexposed
 points of $K$. 
 Lemma~\ref{L:bisecantFaces} identifies most exposed points and bisecant edges.
 The rest of the face lattice is determined in the proof of Theorem~\ref{Th:FaceLattice}.
 We first understand the boundary of each section $\kappa_i=K\cap\Pi_i$.
 Fig.~\ref{F:kappas} shows the possibilities when $\kappa_i$ is not the disc $D_i$.
\begin{figure}[htb]
 \begin{picture}(84,69)(0,-17)
   \put(0,0){\includegraphics{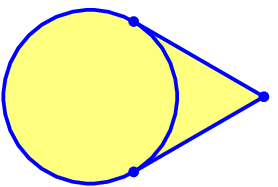}}
   \put(21,23){$D_i$}  \put(37,53){$p_1$} \put(37,-3){$p_2$}  \put(79,26){$q_1$}
   \put(35,-15){(a)}
 \end{picture}
 \qquad
 \begin{picture}(82,74)(0,-12)
   \put(0,0){\includegraphics{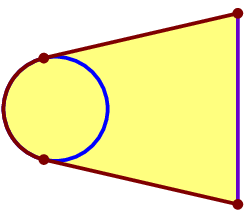}}
    \put(11,27.5){$D_i$}  \put(8,50){$p_1$}\put(8,7){$p_2$} 
    \put(72, 0){$q_1$}    \put(72,57){$q_2$}
  \put(35,-10){(b)}
  \end{picture}
 \qquad
 \begin{picture}(104,76)(-7,-12)
  \put(0,5){\includegraphics{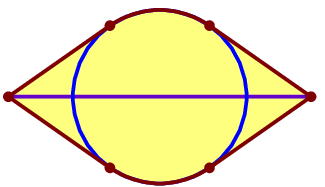}}
  \put(-8,30){$q_1$}  \put(93,30){$q_2$}
  \put(25,57){$p_4$}  \put(60,57){$p_3$}
  \put(25, 4){$p_1$}  \put(60, 4){$p_2$}
  \put(41,17){$D_i$}
  \put(41,-10){(c)}
 \end{picture}
\caption{Some possible slices $\kappa_i$.}
\label{F:kappas}
\end{figure}
There, $q_1$ and $q_2$ are points of the other circle on the boundary of $\kappa_i$ and 
points $p_j$ are nonexposed points of $C_i$ as $T_{p_j}C_i$ meets $q_1$ or $q_2$.
The line segment between $q_1$ and $q_2$ is where the disc of the second circle meets $\Pi_i$.
\end{remark}

\begin{proof}[of Theorem~\ref{Th:FaceLattice}]
We give separate arguments for each order type.

\noindent{\bf Order type $\emptyset$.}
 By Corollary~\ref{C:2face}, both discs are faces of $K$, and every point of the circles is extreme.
 By Lemma~\ref{L:bisecantFaces}, all points of the circles are exposed, and each point lies on exactly one bisecant
 face. 

 The same description holds for order type $(2c)$.
 As the edge curve for order type $\emptyset$ is smooth and of genus 1, while that for order type
 $(2c)$ is singular, the edge curve distinguishes these two order types.

\noindent{\bf Order type $(1,1)$.}
 Since $m_1=2$ and $m_2=0$, $D_1$ is the only 2-face.
 The section $\kappa_2$ is similar to Fig.~\ref{F:kappas} (b), so the extreme points on $C_2$ form an arc
 $\arc{p_1,p_2}$ whose endpoints are not exposed, each lying on one bisecant
 edge. 
 The interior points of $\arc{p_1,p_2}$ are exposed by Lemma~\ref{L:bisecantFaces} and each lies on two bisecant
 edges.
 Similarly, every point of $C_1$ is exposed and lies on one bisecant edge.

 The same description holds for order type $(1,2,1)$.
 Its edge curve is singular, while order type $(1,1)$
 has a smooth edge curve.

\noindent{\bf Order type $(1,2,2,1)$.}
 Since  $m_1=m_2=2$, $K$ has no 2-faces.
 Since $C_2$ meets the interior of $D_1$, Corollary~\ref{C:2face} implies that every point of $C_1$ is extreme and 
 $C_2$ has two intervals of extreme points.  
 The four endpoints are not exposed and each lies on one  bisecant edge.
 By Lemma~\ref{L:bisecantFaces}, every point of $C_1$ and of the interior of the arcs on $C_2$ is exposed and  lies
 on two bisecant edges. 

\noindent{\bf Order type $(1,1,2,2)$.}
 By Corollary~\ref{C:2face}, $K$ has no 2-faces and each circle has one arc of extreme points, as the sections
 $\kappa_i$ are similar to Fig.~\ref{F:kappas} (a).
 As before, each endpoint of an arc is not exposed and lies on one bisecant edge, and each interior point
 of an arc is exposed and lies on two bisecant edges.

 The same description holds for order types $(1,2,1,2)$ and $(1,S,2)$.
 The edge curve in type $(1,1,2,2)$ has two real components as seen in Example~\ref{Ex:edge_curves2}, while for 
 type $(1,2,1,2)$ there is one real component, and both are smooth.
 For type $(1,S,2)$, the edge curve is singular at the shared point.

\noindent{\bf Order type $(1,2,S)$.}
 Again, $K$ has no 2-faces.
 All points of $C_1$ are extreme and $C_2$ has an arc of extreme points whose
 endpoints are not exposed and each lies on one bisecant edge.
 Also, all interior points of that arc and of $C_1$---except possibly the shared point
 $p$---are exposed and lie on two bisecant edges.
 The tangents $T_pC_1$ and $T_pC_2$ span a plane exposing $p$ and $p$ lies on no bisecant edges.

\noindent{\bf Order type $(S,S)$.}
 There are no 2-faces and as in type $(1,2,S)$ every point of the circles is extreme, and the nonshared
 points are exposed and each lies on two bisecant edges.
 Each shared point is exposed by the plane spanned by the two tangents at that point and neither shared point lies
 on a bisecant edge.

\noindent{\bf Order type $(1,S)$.}
 The only 2-face is $D_1$.
 Every point of $C_1$ is extreme and $C_2$ has an arc $\arc{p_1,p_2}$ of extreme points with one endpoint,
 say $p_1$, the shared point where $C_2$ is tangent to $\Pi_1$.
 Neither endpoint is exposed and $p_2$ lies on one bisecant edge (the bisecant $T_{p_1}C_2$ meets the interior of
 $D_1$).
 By Lemma~\ref{L:bisecantFaces}, every point of $C_1$ except $p_1$ lies on one bisecant edge and every interior
 point of  $\arc{p_1,p_2}$ lies on two bisecant edges, and all of these are exposed.

\noindent{\bf Order type $(1,1,2)$.}
 The only 2-face is $\kappa_1$ and its shape is as in Fig~\ref{F:kappas} (a) with the
 vertex $q_1$ where $D_2$ is tangent to $\Pi_1$.
 There is an arc $\arc{p_1,p_2}$ of extreme points of $C_1$ whose endpoints are not exposed with each lying on a
 bisecant edge $\overline{p_i,q_1}$.
 The section $\kappa_2$ has the same shape and $C_2$ has an arc $\arc{q_1,q_2}$ of extreme points with neither
 endpoint exposed.
 The point $q_2$ lies on one bisecant edge along $T_{q_2}C_2$ and $q_1$ lies on two bisecant edges
 $\overline{p_i,q_1}$. 
 Neither of the edges $\overline{p_i,q_1}$ is exposed as $\Pi_1$ is the only supporting plane of $K$ containing
 either edge.
 Finally, by Lemma~\ref{L:bisecantFaces}, interior points of the arcs are exposed, with those from
 $\arc{p_1,p_2}$ lying on one bisecant edge and those from $\arc{q_1,q_2}$ lying on two.\smallskip

 In the order types of the last row of Table~\ref{T:GalleryIII}, the circle $C_2$ is tangent to $\Pi_1$ at a
 point $q_1$ and the tangent $T_{q_1}C_2$ does not meet the interior of $D_1$.
 In the pencil of planes containing $T_{q_1}C_2$, $\Pi_1$ and $\Pi_2$ are
 the endpoints of an interval of planes meeting $K\smallsetminus T_{q_1}C_2$ and of an interval of planes that
 meet $K$ only in $T_{q_1}C_2\cap K$.
 Thus, both sections $\kappa_1$ and $\kappa_2$ are 2-faces of $K$ and the face $T_{q_1}C_2\cap K$ is
 exposed.\smallskip 

\noindent{\bf Order type $(1)$.}
 Here, $m_1=1$ and $m_2=0$.
 The 2-face $\kappa_2$ has the same shape as in order type $(1,1,2)$.
 The description of the points and bisecant edges meeting $C_2$ is also the same.
 By Lemma~\ref{L:bisecantFaces} and the preceding observation, every point of $C_1$ is exposed, and all lie on a
 unique bisecant edge except $q_1$, which lies on the two nonexposed bisecant edges $\overline{p_i,q_1}$.

\noindent{\bf Order type $(1,2)$.}
 This is the most complicated.
 Each circle is tangent to the plane of the other, sharing a tangent line, and the
 description is symmetric 
 in the indices $1$ and $2$.
 The 2-faces are the sections $\kappa_1$ and $\kappa_2$, with the description for each is nearly the same as for
 $\kappa_1$ in order type $(1,1,2)$.
 The exception is the bisecant edge $\overline{p_1,q_1}$ lying along the shared tangent.
 This is exposed, but neither endpoint is exposed.
 It is also isolated from the other bisecant edges, which form a continuous family.

\noindent{\bf Order type $(S)$.}
 The two circles are mutually tangent at a point $p$.
 The 2-faces are $D_1$ and $D_2$, every point of either circle is extreme, including $p$, and each (except for $p$)
 lies on one bisecant edge.
\end{proof}

Table~\ref{T:faceLattices} summarizes the face lattices by order type.
In it, when $m_i=1$, $p_i$ is the point were $C_i$ is tangent to the plane of the other circle.\smallskip
%
%
\begin{table}[htb]
  \begin{tabular}{
		p{\dimexpr.15\linewidth-2\tabcolsep-1.3333\arrayrulewidth}
		p{\dimexpr.28\linewidth-2\tabcolsep-1.3333\arrayrulewidth}
		p{\dimexpr.40\linewidth-2\tabcolsep-1.3333\arrayrulewidth}
		p{\dimexpr.16\linewidth-2\tabcolsep-1.3333\arrayrulewidth}
	}
	
		\centering \textbf{Order Type}&
		\centering \textbf{0-faces}& 
		\centering\textbf{1-faces}&
		\centering\arraybackslash \textbf{2-faces}\\ \midrule
		
		\centering $\emptyset$&
		\centering Points on $C_1\cup C_2$& 
		\centering One family parameterized by $C_1$&
		\centering \arraybackslash$D_1$, $D_2$ \\ \midrule
		
		\centering\multirow{2}{*}{$(1,1)$}&
		\centering Points on $C_1$ and points on an arc of $C_2$&
		\centering\multirow{2}{*}{One family parameterized by $C_1$}&
		\centering\arraybackslash\multirow{2}{*}{$D_1$}\\ \midrule
		
		\centering\multirow{2}{*}{$(1,2,2,1)$}&
		\centering Points on $C_1$ and points on two arcs of $C_2$&
		\centering\multirow{2}{*}{Two families parameterized by $C_1$}&
		\centering\arraybackslash\multirow{3}{*}{None}     \\ \midrule
		
		\centering\multirow{2}{*}{$(1,1,2,2)$}&
		\centering Points on an arc of $C_1$\\ and an arc of $C_2$& 
		\centering One family parameterized by a 2-fold branched cover of an arc&
		\centering\arraybackslash\multirow{2}{*}{None}     \\ \midrule
		
		
		\centering$(1,2,1,2)$&
		\multicolumn{3}{c}{  Same as order type $(1,1,2,2)$}\\ \midrule
		
		
		\centering$(1,S,2)$&
		\multicolumn{3}{c}{Same as order type $(1,1,2,2)$}\\ \midrule
		
		\centering\multirow{2}{*}{$(1,2,S)$}&
		\centering Points on $C_1$ and \\ an arc of $C_2$& 
		\centering Two families parameterized\\ by $C_1\smallsetminus C_2$&
		\centering\arraybackslash\multirow{2}{*}{None}     \\ \midrule
		
		\centering\multirow{2}{*}{$(S,S)$}&
		\centering \multirow{2}{*}{Points on $C_1 \cup C_2$} & 
		\centering Four families with two parameterized by each arc $C_1\smallsetminus C_2$&
		\centering\arraybackslash\multirow{2}{*}{None}     \\ \midrule
		
		\centering$(2c)$&
		\multicolumn{3}{c}{Same as order type $\emptyset$}\\ \midrule
		
		\centering\multirow{2}{*}{$(1,S)$}&
		\centering Points on $C_1$ and\\ an arc of $C_2$ & 
		\centering One family parameterized\\ by $C_1\smallsetminus C_2$&
		\centering\arraybackslash\multirow{2}{*}{$D_1$}     \\ \midrule
		
		\centering\multirow{2}{*}{$(1,1,2)$}&
		\centering Points on an arc of $C_1$\\ and an arc of $C_2$ & 
		\centering One family parameterized by\\ the arc of $C_1$&
		\centering\arraybackslash\multirow{2}{*}{$\conv(D_1,p_2)$}     \\ \midrule
		
		\centering$(1,2,1)$&
		\multicolumn{3}{c}{Same as order type $(1,1)$}\\ \midrule
		
		\centering\multirow{2}{*}{$(1)$}&
		\centering Points on $C_1$ and\\ an arc of $C_2$ & 
		\centering One family parameterized by\\ the arc on $C_2$&
		\centering\arraybackslash$D_1$, $\conv(D_2,p_1)$    \\ \midrule
		
		\centering\multirow{2}{*}{$(1,2)$}&
		\centering Points on an arc of $C_1$\\ and an arc of $C_2$ & 
		\centering One family parameterized by either arc, 
                 and an isolated bisecant $\overline{p_1,p_2}$&
		\centering\arraybackslash$\conv(D_1,p_2)$, $\conv(D_2,p_1)$ \\ \bottomrule
		
		\centering\multirow{2}{*}{$(S)$}&
		\centering \multirow{2}{*}{Points on $C_1 \cup C_2$} & 
		\centering One family parameterized by either circle except the common point&
		\centering\arraybackslash\multirow{2}{*}{$D_1$, $D_2$}     \\ \midrule
	\end{tabular}
 \caption{Face lattices.}\label{T:faceLattices}
	
\end{table}


By Theorem~\ref{Th:one}, the convex hull $K$ is a spectrahedral shadow.
We use our classification to describe when $K$ is a spectrahedron.

\begin{lemma}\label{QuadricLemma}
 Let $C_1,C_2 \in \PP^3$ be conics in distinct planes $\Pi_1$ and $\Pi_2$. 
 If $C_1 \cap \Pi_2 = C_2\cap \Pi_1$, then $C_1$ and $C_2$ lie on a pencil of quadrics.
\end{lemma}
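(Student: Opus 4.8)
The plan is to write down two linearly independent quadrics vanishing on $C_1\cup C_2$ explicitly; this suffices, since a pencil of quadrics is a $\PP^1$ (equivalently, a $2$-dimensional linear system). Set $\ell:=\Pi_1\cap\Pi_2$. Since $C_i\subset\Pi_i$ we have $C_i\cap\Pi_j=C_i\cap\ell$ for $\{i,j\}=\{1,2\}$, so the hypothesis $C_1\cap\Pi_2=C_2\cap\Pi_1$ says exactly that $C_1\cap\ell$ and $C_2\cap\ell$ are one and the same length-$2$ subscheme $Z$ of the line $\ell$.

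Next I would choose linear forms $\pi_1,\pi_2$ cutting out $\Pi_1,\Pi_2$, and on each plane a defining form $q_i\in H^0(\Pi_i,\cO(2))$ of the conic $C_i$; then pick any quadratic form $Q_i$ on $\PP^3$ restricting to $q_i$ on $\Pi_i$. Restricting to $\ell$, the form $q_i|_\ell$ is a nonzero quadratic form on $\ell\cong\PP^1$ whose zero scheme is $C_i\cap\ell=Z$; as a length-$2$ subscheme of $\PP^1$ pins down its defining form up to a scalar, after rescaling $Q_2$ we may assume $q_1|_\ell=q_2|_\ell$. Consequently $(Q_1-Q_2)|_\ell=0$, so $Q_1-Q_2$ lies in the degree-$2$ part of the ideal $I(\ell)=(\pi_1,\pi_2)$ (the planes being distinct, $\ell$ is their transverse intersection), giving linear forms $A,B$ with $Q_1-Q_2=\pi_1A+\pi_2B$.

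Now set $\widetilde{Q}:=Q_1-\pi_1A$, so that also $\widetilde{Q}=Q_2+\pi_2B$. Restricting to $\Pi_1$ kills $\pi_1A$ and leaves $q_1$, so $\widetilde{Q}$ vanishes on $C_1$; restricting to $\Pi_2$ kills $\pi_2B$ and leaves $q_2$, so $\widetilde{Q}$ vanishes on $C_2$. Thus $\widetilde{Q}$ vanishes on $C_1\cup C_2$, as does the reducible quadric $\pi_1\pi_2$, whose zero locus is $\Pi_1\cup\Pi_2$. These two quadrics are linearly independent, since $\pi_1\pi_2$ vanishes identically on $\Pi_1$ while $\widetilde{Q}|_{\Pi_1}=q_1\neq0$; hence they span a pencil of quadrics through $C_1\cup C_2$. (One can give a parallel proof by a dimension count: quadrics vanishing on $C_i$ form a $5$-dimensional space $V_i\subset H^0(\PP^3,\cO(2))=\CC^{10}$, the restriction map to $H^0(\ell,\cO(2))=\CC^3$ carries each $V_i$ onto the line $\CC\cdot q_i|_\ell$, these lines coincide by hypothesis, so $V_1+V_2$ sits inside the $8$-dimensional preimage of that line and $\dim(V_1\cap V_2)\geq 10-8=2$.)

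The only genuinely delicate point — the step I would be most careful with — is the rescaling: one must verify that $q_i|_\ell\neq0$ (true because a smooth conic does not contain $\ell$) and that the scheme-theoretic equality $C_1\cap\ell=C_2\cap\ell$ really forces $q_1|_\ell$ and $q_2|_\ell$ to be proportional, including the case where $\ell$ is tangent to one of the conics, so that $Z$ is a nonreduced point. Everything else is routine manipulation of quadratic forms modulo the ideal of $\ell$.
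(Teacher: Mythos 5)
Your construction is correct and is essentially the paper's own proof written coordinate-freely: in the paper's coordinates ($\pi_1=w$, $\pi_2=z$, $Q_1=f(x,y,z)$, $Q_2=g(x,y,w)$) your $\widetilde{Q}$ is exactly their quadric $h=f(x,y,z)+g(x,y,w)-f(x,y,0)$, obtained after the same rescaling so that the two conic equations agree on $\ell=\Pi_1\cap\Pi_2$. Your additional checks (linear independence with $\pi_1\pi_2$, the tangency/nonreduced case of $C_i\cap\ell$, and the parenthetical dimension count) are sound but do not constitute a different route.
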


\begin{proof}
 Since $C_1,C_2$ lie on the singular quadric $\Pi_1\cup\Pi_2$, we need only find a second quadric containing
 them. 
 Choose coordinates $[x:y:z:w]$ for $\PP^3$ so that $\Pi_1$ is defined by $w=0$ and $\Pi_2$ by $z=0$.
 Then $C_1$ and $C_2$ are given by homogeneous quadratic polynomials $f(x,y,z)=0$ and $g(x,y,w)=0$.
 Since $C_1 \cap \Pi_2 = \Pi_1\cap C_2$, the forms $f(x,y,0)$ and $g(x,y,0)$ define the same scheme, so they are
 proportional.
 Scaling $g$ if necessary, we may assume that $f(x,y,0)=g(x,y,0)$.
 Define $\defcolor{h(x,y,z,w)}$ to be $f(x,y,z)+g(x,y,w)-f(x,y,0)$.
 It follows that  $h(x,y,z,0) = f(x,y,z)$ and $h(x,y,0,w) = g(x,y,w)$, and
 thus $C_1$ and $C_2$ lie on the quadric defined by $h$.  
\end{proof}

\begin{theorem}\label{Th:isSectrhedron}
 The convex hull of two circles $C_1$ and $C_2$ lying in distinct planes in $\RR^3$
 is a spectrahedron only if they have order type $(SS)$ or $(2c)$ or $(S)$.
\end{theorem}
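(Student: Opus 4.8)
The plan is to show that in all order types other than $(SS)$, $(2c)$, and $(S)$, the convex hull $K$ fails to be a spectrahedron, and the obstruction comes from the existence of a nonexposed face. Recall that every face of a spectrahedron is exposed (as noted in Section~\ref{S:background}, the $\PSD$ cone has only exposed faces, and this property is inherited by affine sections). So it suffices to exhibit, for each of the remaining order types, at least one nonexposed face of $K$. From the proof of Theorem~\ref{Th:FaceLattice} and the summary in Table~\ref{T:faceLattices}, one reads off that every order type except $(SS)$, $(2c)$, $(S)$, and possibly a couple of borderline cases has a nonexposed $0$-face (an endpoint of an arc of extreme points on one of the circles) or a nonexposed bisecant edge (as in order types $(1,1,2)$ and $(1,2)$). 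For instance, order type $\emptyset$ has discs $D_1,D_2$ as faces but the curves $C_1, C_2$ are all exposed there — wait, that one needs care — so I must instead invoke the complementary characterization below.

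The cleaner approach is to characterize exactly which order types give a convex hull all of whose faces are exposed, and then separately rule out the two-disc cases $\emptyset$ and $(2c)$ and $(S)$ by a quadric/determinantal argument. Concretely: first, inspecting Table~\ref{T:faceLattices}, the order types with \emph{no} nonexposed face are precisely $\emptyset$, $(2c)$, $(S)$, $(S,S)$, $(1,2,S)$ — here I need to double-check $(1,2,S)$ and $(S,S)$, since in those the only candidate nonexposed faces turn out to be exposed by the plane spanned by the two tangents at the shared point. So first I would carefully list, from the proof of Theorem~\ref{Th:FaceLattice}, exactly which order types have a genuinely nonexposed face; all of those are immediately excluded from being spectrahedra. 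This leaves only $\emptyset$, $(2c)$, $(S)$, $(S,S)$, and $(1,2,S)$ as candidates, and the theorem claims only $(SS)$, $(2c)$, $(S)$ survive — so the remaining work is to rule out $\emptyset$ and $(1,2,S)$ by some other means.

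For those last two I would argue with the algebraic boundary. When $K$ is a spectrahedron of the form $\{x : A_0 + \sum x_i A_i \succeq 0\}$, its algebraic boundary is a component of a symmetric determinantal hypersurface, which is in particular a \emph{hypersurface} of some degree, and more importantly its singular locus and the way faces sit inside it is highly constrained. By Theorem~\ref{Th:degreeEight}, in order type $\emptyset$ the edge surface $\cE$ is irreducible of degree eight with the two conics as double curves, and together with the two planes $\Pi_1,\Pi_2$ it forms the algebraic boundary; one then shows this configuration cannot be cut out as (a component of) a spectrahedral determinant — e.g. because a spectrahedron with a two-dimensional face lying in a plane would force that plane to be a component of the determinant with the right multiplicity, and counting degrees and the genus-one ruling of $\cE$ against the possible ranks in a linear pencil of symmetric matrices yields a contradiction. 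The analogous degree/rank bookkeeping handles $(1,2,S)$. Finally, for the positive direction one notes that in order types $(SS)$, $(2c)$, $(S)$ the two conics lie on a pencil of quadrics by Lemma~\ref{QuadricLemma}, and one can then write $K$ explicitly as a spectrahedron using that pencil (this is the content of the surrounding discussion; the "only if" in the theorem statement is the part I am proving, so strictly I only need the exclusions). The main obstacle is the determinantal/degree argument ruling out $\emptyset$: making precise why an irreducible octic ruled surface with a genus-one ruling and two double conics cannot appear in a symmetric linear determinant requires either a rank-stratification count on the pencil $A_0 + \sum x_i A_i$ or an appeal to the structure theory of spectrahedra (faces of a spectrahedron are exposed and correspond to kernels of the matrix pencil), and I expect that to be where the real work lies; the arc-endpoint cases are essentially immediate from Table~\ref{T:faceLattices}.
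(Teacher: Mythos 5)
Your first half is on the right track and matches the paper: spectrahedra have only exposed faces, and by Corollary~\ref{C:2face} every order type other than $(S,S)$, $(2c)$, $(S)$ and $\emptyset$ has a tangent line to one circle meeting the other circle in a second point, hence a nonexposed face, so those types are eliminated at once. But your sorting of the borderline cases is off: $(1,2,S)$ does \emph{not} belong on your list of types with all faces exposed --- there $C_2$ crosses the interior of $D_1$, so $C_2$ has an arc of extreme points whose endpoints are nonexposed (see the proof of Theorem~\ref{Th:FaceLattice}), and no extra argument is needed for it. The only type that genuinely cannot be excluded by nonexposed faces is $\emptyset$, where every face is exposed.

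That case is where your proposal has a real gap. You propose to rule out type $\emptyset$ by a degree/rank bookkeeping argument on a symmetric linear matrix pencil, but you never derive a contradiction, and it is not clear how such a count would go: a spectrahedron in $\RR^3$ can be cut out by a symmetric determinant of any size, so there is no a priori bound to play the degree eight of $\cE$ against, and ``the plane of a $2$-face must divide the determinant with the right multiplicity'' is not a statement you can invoke without proof. The paper's argument is different and much shorter: by Theorem~\ref{Th:degreeEight} the edge surface is \emph{irreducible} of degree eight, and (Example~\ref{Ex:edge_curves2}) its real points form two families of stationary bisecants, one sweeping out the boundary of $K$ and one lying in the interior of $K$. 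Hence any polynomial vanishing on the two-dimensional part of $\partial K$ carried by $\cE$ vanishes on all of $\cE$, in particular at interior points of $K$; therefore $K$ is not a \emph{basic} closed semialgebraic set, while every spectrahedron is basic (e.g.\ via the principal-minor or characteristic-polynomial inequalities). Without this irreducibility-plus-internal-bisecants step, or a worked-out substitute, your proof of the theorem is incomplete precisely in the one case that the exposedness obstruction cannot reach. (Your remark that the ``if'' direction via Lemma~\ref{QuadricLemma} is not needed for the statement as phrased is correct.)
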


\begin{proof} 
 We have that $C_1 \cap \Pi_2 = C_2\cap \Pi_1$ in $\PP^3$ if and only if 
 the circles have order type $(SS)$ or $(2c)$ or $(S)$. 
 By Lemma~\ref{QuadricLemma}, $C_1$ and $C_2$ lie on a pencil $Q_1+ t Q_2$ of quadrics. 
 Following Example 2.3 in~\cite{RS12}, this pencil of quadrics contains singular quadrics given by the real roots
 of $\det(Q_1 + tQ_2)$. 
 Such a singular quadric is given by the determinant of a $2 \times 2$ matrix polynomial $Ax + By + Cz + D$, 
 and the block diagonal matrix with blocks $A$, $B$, $C$, and $D$ represents $\conv(C)$ as a spectrahedron.

 By Corollary~\ref{C:2face}, $K$ has a nonexposed face when a tangent line to one circle meets the other
 circle in a different point.
 This occurs for all the remaining order types of the circles $C_1$ and $C_2$, except type 
 $\emptyset$ where $C_1\cap C_2=\emptyset$ in $\PP^3$. 
 In this case, the edge curve is irreducible with two connected real components and the
 edge surface meets the interior of $\conv(C)$  (as there are internal stationary bisecants). 
 Thus, $\conv(C)$ is not a basic semialgebraic set and thus not a spectrahedron.
\end{proof}

%
\section{Convex hulls through duality}\label{S:Duality}

We sketch an alternative approach to studying the convex hull $K$ of two circles that uses projective
duality. 
This is inspired by the paper~\cite{ellipsoids} and accompanying video~\cite{ellipsoids_video} that explains
a solution to the problem of determining the convex hull of three ellipsoids in $\RR^3$.

%
Points \defcolor{$\check{\Pi}$} of the dual projective space \defcolor{$\check{\PP}^3$} correspond to planes
$\Pi$ of the primal space $\PP^3$.
A line \defcolor{$\check{\ell}$} represents the pencil of planes
containing a fixed line $\ell\subset\PP^3$, and a plane \defcolor{$\check{o}$} represents the net of
planes incident on a point $o\in\PP^3$.
The dual $\defcolor{\check{C}}\subset\check{\PP}^3$ of a conic $C\subset\PP^3$ is the set of planes that
contain a line tangent to $C$.

\begin{lemma}
 The dual $\check{C}$ to a conic $C$ is a quadratic cone in $\check{\PP}^3$ with vertex $\check{\Pi}$
 corresponding to the plane $\Pi$ of $C$.
\end{lemma}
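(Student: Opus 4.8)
The plan is to exhibit $\check{C}$ explicitly in coordinates and recognize its defining equation as that of a quadratic cone. First I would choose homogeneous coordinates $[x:y:z:w]$ on $\PP^3$ so that the plane $\Pi$ of $C$ is $w=0$; then $C$ is cut out in $\Pi$ by a nondegenerate quadratic form, say $q(x,y,z)=0$, represented by a symmetric $3\times 3$ matrix $A$. A plane $H$ in $\PP^3$ is given by an equation $ax+by+cz+dw=0$, so the dual coordinates are $\check{H}=[a:b:c:d]$. The key step is to translate the condition ``$H$ contains a line tangent to $C$'' into an equation in $a,b,c,d$.

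The main computation: a line tangent to $C$ lies in the plane $\Pi$ (since $C\subset\Pi$ and tangent lines to a plane conic stay in its plane), and the tangent line to $C$ at a point $p=[p_0:p_1:p_2:0]\in C$ is $\{x\in\Pi : p^{\mathsf T}\!A(x_0,x_1,x_2)^{\mathsf T}=0\}$ by the standard description of tangents to a smooth conic. The plane $H$ contains this tangent line precisely when $H\cap\Pi$ equals that tangent line, i.e.\ when the linear form $a x+b y+c z$ restricted to $\Pi$ is proportional to $(Ap)\cdot(x,y,z)$. Equivalently, $(a,b,c)^{\mathsf T}$ lies in the image of $A$, which is all of $\CC^3$ since $A$ is invertible, so there is a unique (up to scalar) such $p$ for each $(a,b,c)\neq 0$, namely $p$ proportional to $A^{-1}(a,b,c)^{\mathsf T}$; the condition $q(p)=0$ that $p\in C$ then becomes $(a,b,c)A^{-1}(a,b,c)^{\mathsf T}=0$ (using $A^{\mathsf T}=A$). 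Thus $\check{C}$ is defined by the quadratic form $(a,b,c)\,A^{-1}(a,b,c)^{\mathsf T}=0$ in $\check{\PP}^3$, an equation in which the variable $d$ does not appear.

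A quadratic form in four variables $a,b,c,d$ that is independent of $d$ has rank $3$ (since $A^{-1}$ is invertible), hence defines a quadratic cone whose vertex is the unique point in the kernel of the associated $4\times 4$ symmetric matrix; that kernel is spanned by $[0:0:0:1]$, which is exactly the dual coordinate $\check{\Pi}$ of the plane $\Pi=\{w=0\}$ of $C$. I would also remark on the degenerate boundary cases to confirm the description is literal rather than schematic: when $(a,b,c)=0$ the plane $H=\{w=0\}$ is $\Pi$ itself, which vacuously contains every tangent line of $C$ and lies on the cone, consistent with it being the vertex. The only mild obstacle is bookkeeping care with the standard fact that the dual of the smooth conic cut out by $A$ inside its plane is cut out by $A^{-1}$ (equivalently the adjugate) inside the dual plane; once that is in hand the statement follows immediately, and the identification of the vertex with $\check{\Pi}$ is then just reading off the kernel of a block matrix $\mathrm{diag}(A^{-1},0)$.
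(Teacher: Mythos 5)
Your proof is correct, but it takes a different route from the paper's. You work in coordinates: putting $\Pi$ at $w=0$ and $C$ as $v^{\mathsf T}Av=0$ in that plane, you observe that a plane $H\neq\Pi$ contains a tangent line exactly when $H\cap\Pi$ is tangent to $C$, derive the dual-conic condition $(a,b,c)A^{-1}(a,b,c)^{\mathsf T}=0$, and read off that this is a rank-three quadric in $\check\PP^3$ whose vertex, the kernel point $[0:0:0:1]$, is $\check\Pi$; your handling of the boundary case $H=\Pi$ is the right care to take. The paper instead argues synthetically and without coordinates: for each $p\in C$ the pencil of planes containing $T_pC$ is a line in $\check\PP^3$ passing through $\check\Pi$ (since $T_pC\subset\Pi$), so $\check C$ is a cone with vertex $\check\Pi$; then, to see the cone is quadratic, it slices with the plane $\check o$ dual to a point $o\notin\Pi$ and notes that a general line through $o$ lies in exactly two planes of $\check o\cap\check C$, because it meets exactly two tangents of $C$, so the slice is a conic. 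Your version buys an explicit defining equation (the familiar $A\mapsto A^{-1}$, or adjugate, duality for plane conics), which is convenient for actual computation of the dual cones used later in the duality section; the paper's version buys a coordinate-free degree count in the spirit of the intersection-theoretic arguments elsewhere in the text. Both establish the lemma completely.
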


\begin{proof}
 The pencil of planes containing the tangent line $T_pC$ to $C$ is a line lying on
 $\check{C}$ that meets $\check{\Pi}$ as $T_pC\subset\Pi$.
 Thus, $\check{C}$ is a cone in $\check{\PP}^3$ with vertex $\check{\Pi}$.
 Let $o\in\PP^3$ be any point that is not on $\Pi$.
 Then the curve $\check{o}\cap\check{C}$ is the set of planes through $o$ that contain a tangent line $T_pC$ to $C$.
 As there are two such planes that contain a general line $\ell$ through $o$---$\ell$ meets two tangents
 to $C$---the curve $\check{o}\cap\check{C}$ is a conic in $\check{o}$ and 
 $\check{C}$ is the cone over that conic with vertex $\check{\Pi}$. 
\end{proof}

Let $C_1,C_2$ be circles in $\RR^3\subset\RR\PP^3$ lying in distinct planes $\Pi_1,\Pi_2$ and let
$\defcolor{K}$ be the convex hull of $C_1\cup C_2$.
Let \defcolor{$o$} be any point in the interior of $K$.
We will consider the hyperplane $\check{o}\subset\check{\RR\PP}^3$ to be the hyperplane at infinity and 
set $\defcolor{\check{\RR}^3}:=\check{\RR\PP}^3\smallsetminus \check{o}$.
This is an affine space that contains every hyperplane supporting $K$ as well as all those disjoint from $K$, 
as every hyperplane incident on $o$ meets the interior of $K$.
It also contains the point \defcolor{$\check{\infty}$} corresponding to the hyperplane at infinity in $\RR\PP^3$.

For $i=1,2$, let \defcolor{$\check{C}_i$} be the cone in $\check{\RR}^3$ dual to the conic $C_i$.
If $o\in\Pi_i$, then the vertex $\check{\Pi}_i$ of $\check{C}_i$ lies at infinity ($\check{\Pi}_i\in\check{o}$)
and $\check{C}_i$ is a cylinder.
Neither dual cone contains the point $\check{\infty}$.
Let \defcolor{$\check{K}$} be the closure of the component of 
$\check{\RR}^3\smallsetminus\check{C}_1\smallsetminus\check{C}_2$ containing $\check{\infty}$.

\begin{proposition}\label{P:convex-dual}
 Points $\check{\Pi}$ in the interior of $\check{K}$ are exactly those whose corresponding hyperplane $\Pi$
 is disjoint from $K$.
 Points of the boundary of $\check{K}$ correspond to supporting hyperplanes of $K$, and $\check{K}$ is convex and
 bounded. 
\end{proposition}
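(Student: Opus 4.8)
The plan is to establish the three assertions of Proposition~\ref{P:convex-dual} in turn, exploiting the duality between a point $\check\Pi$ of $\check\RR^3$ and the hyperplane $\Pi$ it represents, together with the description of $\check C_i$ as the dual cone of $C_i$. The key geometric fact I will use repeatedly is that a hyperplane $\Pi$ meets a circle $C_i$ (in $\RR\PP^3$) if and only if the point $\check\Pi$ does \emph{not} lie strictly outside the dual cone $\check C_i$, i.e.\ $\check\Pi$ lies on or inside $\check C_i$; equivalently $\Pi\cap C_i=\emptyset$ iff $\check\Pi$ is in the open exterior of $\check C_i$. This is just the statement that the tangent hyperplanes to $C_i$ sweep out exactly $\check C_i$, with the chords of $C_i$ (hyperplanes meeting $C_i$ in two points) corresponding to the interior.

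First I would prove the interior characterization. A point $\check\Pi$ lies in the interior of $\check K$ precisely when it is in the unbounded component of $\check\RR^3\smallsetminus\check C_1\smallsetminus\check C_2$, hence in the open exterior of both $\check C_1$ and $\check C_2$; by the fact above this says $\Pi\cap C_1=\Pi\cap C_2=\emptyset$ in $\RR\PP^3$. Since $o$ was chosen in the interior of $K$ and $\check\RR^3=\check{\RR\PP}^3\smallsetminus\check o$, every hyperplane in $\check\RR^3$ avoids $o$; a hyperplane disjoint from the compact set $K=\conv(C_1\cup C_2)$ is exactly one that is disjoint from $C_1\cup C_2$ and does not separate—but since $\Pi$ misses both circles and misses $o$ (which is in $\conv(C_1\cup C_2)$), $\Pi$ cannot meet $K$ at all (if it did, it would meet some segment $\overline{xy}$ with $x,y\in C_1\cup C_2$ on the same side, forcing $\Pi$ to meet that circle, or it would separate and then meet the segment from $o$ to a circle point, contradiction). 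Conversely a hyperplane disjoint from $K$ is disjoint from $C_1\cup C_2$, so $\check\Pi$ is in the open exterior of both cones and, being connected to $\check\infty$ via translations away from $K$, lies in the unbounded component.

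Next, the boundary. A point $\check\Pi\in\partial\check K$ lies on $\check C_1$ or $\check C_2$ (and outside or on the other), so $\Pi$ is tangent to one circle, say $C_1$, and disjoint from $C_2$; arguing as above, $\Pi$ then touches $K$ but does not cross it, i.e.\ it is a supporting hyperplane. (Conversely, a supporting hyperplane of $K$ is tangent to the boundary and so touches $C_1\cup C_2$ without crossing, putting $\check\Pi$ on the boundary of the exterior region.) For convexity and boundedness of $\check K$: $\check K$ is the set of $\check\Pi$ whose $\Pi$ misses the interior of $K$, which I claim equals $\bigcap_{x\in K}\{\check\Pi : x\notin\Pi\text{ or }\Pi\text{ supports at }x\}$—more cleanly, fixing an affine chart where $o$ is the origin and writing hyperplanes as $\{y:\langle a,y\rangle=1\}$ with $a\in\RR^3$ (this is exactly $\check\RR^3$, with $\check\infty$ the origin $a=0$), the condition ``$\Pi$ disjoint from $K$'' becomes $\langle a,x\rangle<1$ for all $x\in K$, i.e.\ $a\in\bigcap_{x\in K}\{\langle a,x\rangle\le 1\}=K^\circ$, the polar dual of $K$. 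Thus $\check K=K^\circ$ in this chart, which is convex and, since $o\in\operatorname{int}K$ implies $K$ contains a ball about $o$, bounded.

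The main obstacle I expect is the bookkeeping at infinity and the clean identification of $\check K$ with the polar $K^\circ$: one must check that the affine chart $\check\RR^3=\check{\RR\PP}^3\smallsetminus\check o$ really is the space of $a\in\RR^3$ via $\Pi_a=\{\langle a,y\rangle=1\}$ (legitimate because $o$ in the chart's interior means no relevant hyperplane passes through $o$), that $\check\infty$ corresponds to $a=0$, and that the connected component containing $\check\infty$ is exactly the polar body rather than some other chamber—this uses that $K^\circ$ is itself connected (convex) and contains $0$. Once the chart is fixed, convexity, boundedness, and both point characterizations follow from elementary polarity, and the tangent-cone fact handles the correspondence of boundaries. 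I would also remark that the dual cones $\check C_i$ degenerate to cylinders exactly when $o\in\Pi_i$, but since $o$ is interior to $K$ this does not affect the argument.
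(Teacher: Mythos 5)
Your final strategy (identify $\check{K}$ with the polar body $K^{\circ}$ in the affine chart centered at $o$) is sound and is essentially the paper's own argument in different packaging, but your first paragraph contains a genuine error. Lying in the chamber of $\check{\infty}$ does imply lying in the open exterior of both cones, hence that $\Pi$ misses $C_1$ and $C_2$; however, your claimed implication ``$\Pi$ misses $C_1\cup C_2$ and misses $o$, hence misses $K$'' is false. A plane that strictly separates $C_1$ from $C_2$ misses both circles and can easily be chosen to miss $o$, yet it cuts through the interior of $K=\conv(C_1\cup C_2)$; your parenthetical ``or it would separate and then meet the segment from $o$ to a circle point, contradiction'' contradicts no hypothesis. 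Such separating planes are precisely why the exterior of both cones is strictly larger than the chamber of $\check{\infty}$: the complement of $\check{C}_1\cup\check{C}_2$ has other chambers consisting of planes missing both circles but meeting $K$, so membership in the exterior of both cones cannot substitute for connectivity to $\check{\infty}$. (Also, the chamber containing $\check{\infty}$ is the \emph{bounded} one---its boundedness is part of the assertion---so the recurring phrase ``unbounded component'' is wrong.)

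The repair is exactly what you flag as the ``main obstacle,'' and it must actually be carried out: show that the interior of $K^{\circ}$ is an entire connected component of the complement. It is open, convex (hence connected), contains $\check{\infty}$, avoids both cones (a plane disjoint from $K$ is tangent to neither circle), and its boundary consists of supporting planes of $K$, each of which touches $C_1\cup C_2$ without crossing and therefore lies on $\check{C}_1\cup\check{C}_2$ (including the degenerate case $\Pi=\Pi_i$, which is the cone vertex $\check{\Pi}_i$). Only then is the closure of the chamber of $\check{\infty}$ equal to $K^{\circ}$, after which convexity and boundedness follow from $o\in\operatorname{int}K$ as you say. The paper achieves the same end more directly: in the same chart it slices the dual space into the parallel pencils $\Lambda_{v,t}$ through $\check{\infty}$, showing that along each such line the planes disjoint from $K$ form an open interval around $\check{\infty}$ whose endpoints are supporting planes lying on the cones; this simultaneously identifies the chamber of $\check{\infty}$ with the set of planes disjoint from $K$, gives boundedness from the ball inside $K$, and yields convexity from the closure of $\{\Lambda:\Lambda(K)\subset[0,\infty)\}$ under convex combinations---i.e., the polar-body argument done by hand. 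Either route works; yours is not complete as written at the step noted above.
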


We present an elementary proof of this standard result about convex sets in $\RR^d$.

\begin{proof}
 Choose coordinates $(x,y,z)$ for $\RR^3$ so that $o=(0,0,0)$ is the origin.
 An affine hyperplane is defined by the vanishing of an affine form $\Lambda:=ax+by+cz+d$, whose coefficients
 $[a:b:c:d]$ give homogeneous coordinates for $\check{\RR\PP}^3$.
 In these coordinates, $\check{\infty}$ is the point $[0:0:0:1]$, $\check{o}$ has equation $d=0$, and the
 points of the affine $\check{\RR}^3$ have coordinates $[a:b:c:1]$, so that $\check{\infty}$ is the origin in
 $\check{\RR}^3$. 

 Let $v=(\alpha,\beta,\gamma)\in\RR^3\smallsetminus\{(0,0,0)\}$ and consider the linear map 
 $\defcolor{\Lambda_v}\colon\RR^3\to\RR$,
\[
   \Lambda_v(x,y,z)\ :=\ \alpha x + \beta y + \gamma z\,.
\]
 Since $\Lambda_v^{-1}(0)$ is a plane containing the origin $o$, $\Lambda_v(K)$ is a closed interval
 $[\epsilon,\delta]$ with $0$ in its interior, so that $\epsilon<0<\delta$.
 Thus, the points
\[
   \Lambda_{v,t}\ \colon\ [t\alpha:t\beta:t\gamma:1]\ \ 
   \mbox{for}\ \ -\tfrac{1}{\delta}<t<-\tfrac{1}{\epsilon}
\]
 of $\check{\RR\PP}^3$ are exactly the hyperplanes in $\RR\PP^3$  parallel to $\Lambda_v^{-1}(0)$  that are
 disjoint from $K$ as $\Lambda_{v,t}(K)\subset (0,\infty)$ for $-\tfrac{1}{\delta}<t<-\tfrac{1}{\epsilon}$.

 All other planes parallel to $\Lambda^{-1}_v(0)$ meet $K$, with $\Lambda_{v,-1/\delta}$ and $\Lambda_{v,-1/\epsilon}$
 the hyperplanes in this family that support $K$.
 These supporting hyperplanes necessarily lie on $\check{C}_1\cup \check{C}_2$.
 Hence, the interior of $\check{K}$ is exactly the set of all hyperplanes disjoint from $K$ and its boundary is
 exactly the set of hyperplanes supporting $K$.

 As $o$ lies in the interior of $K$, there is a closed ball centered at $o$ of radius $1/\rho$ contained in the
 interior of $K$.
 For any unit vector $v$, the numbers $\epsilon,\delta$ defined by $\Lambda_v(K)=[\epsilon,\delta]$ satisfy
 $|1/\epsilon|, |1/\delta| < \rho$.
 Thus, the coordinates of points $[\alpha:\beta:\gamma:1]$ in $\check{K}$ satisfy
 $\|(\alpha,\beta,\gamma)\|<\rho$, proving that $\check{K}$ is bounded.

 Let $\Lambda=[a:b:c:1]$ and $\Lambda'=[a':b':c':1]$ be points of $\check{K}$.
 Then  $\Lambda(K),\Lambda'(K)\subset [0,\infty)$.
 Since $[0,\infty)$ is convex, for every $t\in[0,1]$, if $\Lambda_t:=t\Lambda+(1-t)\Lambda'$, then
 $\Lambda_t(K)\subset[0,\infty)$ and so $\Lambda_t\in\check{K}$.
 This proves that $\check{K}$ is convex.
\end{proof}

Points in the boundary ($\partial K$) of $\check{K}$ are hyperplanes supporting $K$, and faces of  $\check{K}$
correspond to exposed faces of $K$.
For example, $\check{\Pi}_i\in\partial \check{K}$ if and only if the plane $\Pi_i$ of $C_i$ supports a
two-dimensional face of $K$.
Points of the curve in $\partial \check{K}$ where the cones $\check{C}_1$ and $\check{C}_2$ meet correspond to
stationary bisecants, and line segments in the ruling of $\check{C}_i$ lying in 
$\partial\check{K}$ correspond to the exposed points of $C_i$ in $K$.
This may be seen in Fig.~\ref{Fig:duals}, 
\begin{figure}[htb]
  \centering
  \raisebox{-41pt}{\includegraphics[height=95pt]{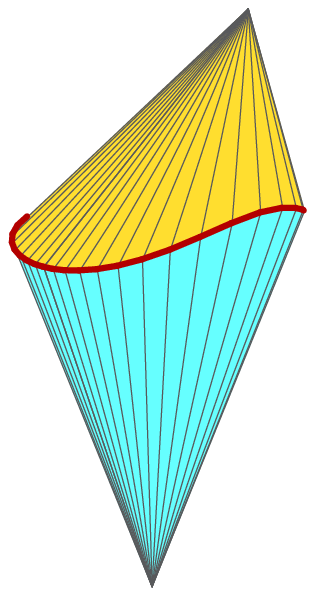}}\qquad
  \raisebox{-33pt}{\includegraphics[height=78pt]{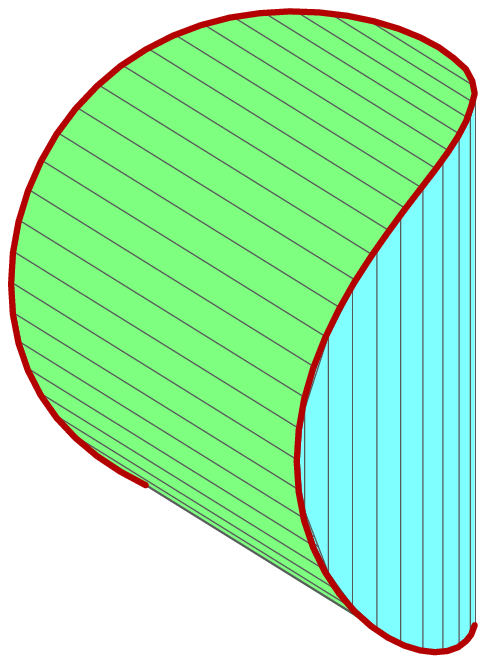}}\qquad
  \raisebox{-41pt}{\includegraphics[height=95pt]{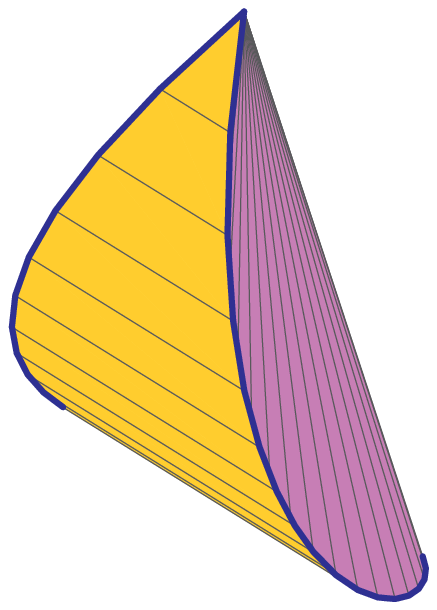}}
  \caption{Duals to convex hulls.}\label{Fig:duals}
 \end{figure}
which shows the dual bodies to the convex hulls of
Fig.~\ref{Fig:firstTaste}. 
For these, the origin $o$ is the midpoint of the segment joining the centers of the circles.

The intersection of two cones on the left has cone points corresponding to the planes of the 
discs in the boundary of the convex set on the left in Fig.~\ref{Fig:firstTaste}.
In the center is the dual of the oloid.
The origin $o$ is in the interior of the discs of the circles, so both cones $\check{C}_i$ are elliptical
cylinders.
On the right is the intersection of a cone with a horizontal cylinder meeting its vertex.
The cylinder is dual to the vertical circle in the rightmost convex set in Fig.~\ref{Fig:firstTaste}. 
The vertex is the 2-dimensional face, and the two branches of the intersection curve at the vertex of the cone have
limit the two nonexposed stationary bisecants.

%

In~\cite{ellipsoids}, the authors sketch an exact algorithm (beautifully explained in~\cite{ellipsoids_video}) to
compute the convex hull of three ellipsoids $P$, $Q$, and $R$ in $\RR^3$.
Their approach inspired the previous discussion.

If the origin $o$ lies in the interior of an ellipsoid $P$, then its dual $\check{P}$ is also an ellipsoid.
If $o$ lies on $P$, then its dual is a paraboloid and $\check{\infty}$ lies in the convex component of its complement.
If $o$ is exterior to $P$, then its dual is a hyperboloid of two sheets, and one of the convex components of its
complement contains $\check{\infty}$.

Choosing an origin $o$ in the interior of the convex hull $K$ of $P\cup Q\cup R$ as in
Proposition~\ref{P:convex-dual}, $\check{K}$ is a bounded convex set that is the closure of the region in the
complement of the duals containing the origin $\check{\infty}$. 
The video~\cite{ellipsoids_video} describes the algorithm to compute $K$ when the origin $o$ lies in the interior
of all three ellipsoids.
In that case, the dual $\check{K}$ of the convex hull of the three ellipsoids is the intersection of the three dual
ellipsoids $\check{P}\cap\check{Q}\cap\check{R}$.
Computing $\check{K}$ requires the computation of the curves where two dual ellipsoids intersect,
and points where three dual ellipsoids meet, and then decomposing the dual ellipsoids along these curves
into patches.

This analysis gives three types of points in the boundary of $\check{K}$.
\begin{enumerate}
 \item Points common to all three dual ellipsoids.
       These give tritangent planes in $\partial K$.

 \item Points on curves given by the pairwise intersection of dual ellipsoids.
       They are bitangent planes and give bitangent edges.
       These form 1-dimensional families of 1-faces in $\partial K$.

 \item Points on a single dual ellipsoid.  
       These are tangent planes to an ellipsoid at a point of $K$, and give a two-dimensional family of exposed
       points of $K$ coming from the corresponding ellipsoid.
\end{enumerate}

As we see in Fig.~\ref{Fig:duals}, the dual $\check{K}$ eloquently displays information about the exposed
faces of $K$, but information about the nonexposed faces is less clear in $\check{K}$.

\providecommand{\bysame}{\leavevmode\hbox to3em{\hrulefill}\thinspace}
\providecommand{\MR}{\relax\ifhmode\unskip\space\fi MR }
\providecommand{\MRhref}[2]{%
  \href{http://www.ams.org/mathscinet-getitem?mr=#1}{#2}
}
\providecommand{\href}[2]{#2}

\end{document}